\definecolor{refkey}{rgb}{0,1,1}
\definecolor{labelkey}{rgb}{1,0,0}
\journal{LAMA}
\newtheorem{rem}{Remark}
\newtheorem{thm}{Theorem}
\newtheorem{lem}{Lemma}
\newtheorem{cor}{Corollary}
\newtheorem{prop}[thm]{Proposition}
\newtheorem{Ex}{Example}
\newcommand{\eq} [1] {\begin{equation}\label{#1}\quad}
\newcommand{\en} {\end{equation}}
\newcommand{\scal}[1]{\langle#1\rangle}
\newcommand{\norm}[1]{\left\Vert#1\right\Vert}
\newcommand{\abs}[1]{\left\vert#1\right\vert}
\newcommand{\C}{\mathbb C}
\newcommand{\R}{\mathbb R}
\newcommand{\T}{\mathbb T}
\newcommand{\diag}{\operatorname{diag}}
\renewcommand{\Re}{\operatorname{Re}}
\newcommand{\im}{\operatorname{Im}}
\newcommand{\re}{\operatorname{Re}}
\newcommand{\Span}{\operatorname{Span}}
\begin{document}

\begin{frontmatter}

\title{On the numerical range of some block matrices \\ with scalar diagonal blocks.\tnoteref{support}}


\author{Titas Geryba and Ilya M. Spitkovsky}
\address{Division of Science and Mathematics, New York  University Abu Dhabi (NYUAD), Saadiyat Island,
P.O. Box 129188 Abu Dhabi, United Arab Emirates}

\tnotetext[support]{The results are partially based on the Capstone project of the first named author [TG] under the supervision of the second named author [IMS]. This work was supported in part by Faculty Research funding from the Division of Science and Mathematics, New York University Abu Dhabi.
\\ \hspace*{.5cm} Email addresses: tg1404@nyu.edu [TG]; imspitkovsky@gmail.com, ims2@nyu.edu, ilya@math.wm.edu [IMS]}

\begin{abstract}
Several new verifiable conditions are established for matrices of the form $\left[\begin{smallmatrix} \alpha I_{n-k} & C \\ D & \beta I_k \end{smallmatrix}\right]$ to have the numerical range equal the convex hull of at most $k$ ellipses. For $k=2$, these conditions are also necessary, provided that the ellipses are co-centered.
\end{abstract}

\begin{keyword} Numerical range\end{keyword}

\end{frontmatter}

\section{Introduction}

As usual, let $\C$ denote the field of complex numbers, $\C^{n\times m}$ stand for the set of all $n$-by-$m$ matrices with complex entries, with this notation abbreviated to just $\C^n$ when $m=1$. The standard
scalar product on $\C^n$ will be denoted $\scal{,.,}$, with the respective norm $\norm{.}$ defined by $\norm{x}=\scal{x,x}^{1/2}$.

The {\em numerical range} (a.k.a. the {\em field of values} or the {\em Hausdorff set}) of a matrix $A\in\C^{n\times n}$  is defined as
\[ W(A)=\{ \scal{Ax,x}\colon x\in\C^n,\ \norm{x}=1\}. \]
Introduced a century ago in the pioneering work by O.~Toeplitz \cite{Toe18} and Hausdorff \cite{Hau}, it has been researched extensively thereafter, see e.g. \cite{GusRa} or \cite[Chapter 1]{HJ2}.
Being the image of the unit sphere of $\C^n$ under a continuous mapping $f_A\colon x \mapsto \scal{Ax,x}$, the numerical range $W(A)$ is a closed, bounded, and connected subset of $\C$.
Moreover, $W(A)$ is convex (the classical Toeplitz-Hausdorff theorem). More specifically, it is the convex hull of the so called numerical range {\em generating} curve $C(A)$ having the following characteristic property: for every angle $\theta$ there are exactly $n$ tangent lines of $C(A)$, counting multiplicities, forming the angle $\theta$ with the positive direction of $x$-axis, and their intercepts with the orthogonal family of lines are exactly the eigenvalues of $\im(e^{-i\theta}A)$.
{
A more detailed description of the generating curve can be found in \cite{Ki}, where the notion was introduced, or its English translation \cite{Ki08}; see also  \cite{Fie81}.
}

Here and below for any square matrix $X$ \[ \re X=\frac{1}{2}(X+X^*) \text{ and } \im X =\frac{1}{2i}(X-X^*)
\] are the hermitian components from the representation \[X=\re X+i\im X. \]

In this paper we will consider exclusively matrices of the form
\eq{A} A=\begin{bmatrix} \alpha I_{n-k} & C \\ D & \beta I_k \end{bmatrix},\en
with their diagonal blocks being scalar multiples of the identity. Of course, for $n=2$ all matrices have form \eqref{A}, while $W(A)$ in this case is an elliptical disk with the foci at the eigenvalues of $A$, degenerating -- for normal $A$ -- into the line segment connecting the eigenvalues  (the Elliptical Range theorem). Our goal is to see the extent to which this shape of $W(A)$ persists for matrices \eqref{A} in higher dimensions.

More specifically, we are interested in the cases when the numerical range of $A$ is the convex hull of a finite number of ellipses.

A known general (but not very constructive) condition for this to occur is stated in Section \ref{s:pre}, along with other preliminary results. Several easily verifiable sufficient conditions, applicable to matrices \eqref{A}, are in Section~\ref{s:main}. Finally, if one of the diagonal blocks in \eqref{A} is of the size 2-by-2, the criterion is obtained in Section~\ref{s:k=2} for $W(A)$ to be the convex hull of at most two co-centered ellipses.

\section{Preliminaries}\label{s:pre}

Due to an elementary property
\[ W(\omega A+tI)=\omega W(A)+t \text{ for any } t,\omega\in\C, \]
it suffices to consider matrices \eqref{A} with $\alpha+\beta=0$. We will refrain from doing so in the statements, but will be taking advantage of this simplification in proofs, whenever convenient.  The quantity \eq{gamma} \gamma=\frac{\alpha-\beta}{2},\en invariant under shifts of $A$, will play an important role in what follows.

We will suppose, whenever convenient, that $k\leq n/2$. This can be done without loss of generality, since an appropriate permutational similarity can be used to switch from \eqref{A} to the matrix \eq{B} \begin{bmatrix} \beta I_{k} & D \\ C & \alpha I_{n-k} \end{bmatrix},\en leaving the numerical range unchanged.

Due to the construction of $C(A)$, the crucial role in the description of $W(A)$ for any matrix $A$ is played by the eigenvalues of $\im(e^{-i\theta}A)$. The lemma below provides the pertinent information for matrices of the form \eqref{A}.

\begin{lem}\label{l:eig}Let $A$ be of the form \eqref{A}, with $k\leq n/2$. Then the eigenvalues of $\im(e^{-i\theta}A)$ are $\im(e^{-i\theta}\alpha)$, of multiplicity $n-2k(\geq 0)$, with the remaining $2k$ given by the formula
	
\eq{la} \lambda_j=\frac{1}{2}\left( \im(e^{-i\theta}(\alpha+\beta))\pm\sqrt{ (\im(e^{-i\theta}(\alpha-\beta))^2+\mu_j(\theta)}\right), \quad j=1,\ldots,k.\en
Here $\mu_j(\theta)$ are the eigenvalues of the matrix \eq{M} M(\theta):=H-2\re(e^{-2i\theta}Z),\en
with $H$ and $Z$ defined by
\eq{HCD} H=C^*C+DD^*,\  Z=DC. \en
\end{lem}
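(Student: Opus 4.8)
The strategy is to compute the Hermitian part $\im(e^{-i\theta}A)$ explicitly and reduce its eigenvalue problem to that of a smaller $2k$-by-$2k$ matrix via a Schur-complement / block-manipulation argument. Write $A = \re X + i\,\im X$ is not needed directly; instead, since $e^{-i\theta}A$ again has the block form
\[
e^{-i\theta}A = \begin{bmatrix} e^{-i\theta}\alpha I_{n-k} & e^{-i\theta}C \\ e^{-i\theta}D & e^{-i\theta}\beta I_k \end{bmatrix},
\]
its imaginary part is
\[
\im(e^{-i\theta}A) = \begin{bmatrix} \im(e^{-i\theta}\alpha) I_{n-k} & \tfrac{1}{2i}\bigl(e^{-i\theta}C - e^{i\theta}D^*\bigr) \\ \tfrac{1}{2i}\bigl(e^{-i\theta}D - e^{i\theta}C^*\bigr) & \im(e^{-i\theta}\beta) I_k \end{bmatrix}.
\]
First I would subtract the scalar matrix $\im(e^{-i\theta}\alpha) I_n$, which shifts all eigenvalues by a known amount and leaves a block matrix with zero $(1,1)$-block of size $n-k$ and $(2,2)$-block equal to $\im(e^{-i\theta}(\beta-\alpha)) I_k = -\im(e^{-i\theta}(\alpha-\beta))I_k$. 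Set $r = \im(e^{-i\theta}(\alpha-\beta))$ and let $B = \tfrac{1}{2i}(e^{-i\theta}C - e^{i\theta}D^*) \in \C^{(n-k)\times k}$, so the shifted matrix is $\left[\begin{smallmatrix} 0 & B \\ B^* & -r I_k \end{smallmatrix}\right]$.

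Next I would locate the nonzero eigenvalues of this matrix. The kernel of $B$ (inside $\C^{n-k}$) has dimension at least $n-2k$, and each such kernel vector, padded with zeros, is an eigenvector for eigenvalue $0$; this accounts for the stated multiplicity $n-2k$ of the eigenvalue $\im(e^{-i\theta}\alpha)$ of the original matrix. For the remaining $2k$ eigenvalues I would use the standard fact that the nonzero eigenvalues of $\left[\begin{smallmatrix} 0 & B \\ B^* & -rI_k \end{smallmatrix}\right]$ are governed by $B B^*$ (equivalently $B^* B$, which is $k$-by-$k$): if $\lambda \neq 0$, eliminating the first block of the eigenvector equation gives $B B^* u = \lambda(\lambda + r) u$, so $\lambda^2 + r\lambda$ runs over the eigenvalues of $B^*B$. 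Hence $\lambda_j - \im(e^{-i\theta}\alpha)$ satisfies $\lambda^2 + r\lambda = \nu_j$, where $\nu_j$ are the eigenvalues of $B^*B$, giving $\lambda = \tfrac12(-r \pm \sqrt{r^2 + 4\nu_j})$; shifting back and noting $-r + 2\im(e^{-i\theta}\alpha) = \im(e^{-i\theta}(\alpha+\beta))$ and $r^2 = (\im(e^{-i\theta}(\alpha-\beta)))^2$ yields exactly \eqref{la} provided $\mu_j(\theta) = 4\nu_j$, i.e. $\mu_j(\theta)$ are the eigenvalues of $4B^*B$.

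The remaining task — and the only real computation — is to identify $4B^*B$ with $M(\theta) = H - 2\re(e^{-2i\theta}Z)$. Expanding,
\[
4B^*B = \bigl(e^{-i\theta}D - e^{i\theta}C^*\bigr)^*\bigl(e^{-i\theta}D - e^{i\theta}C^*\bigr)\cdot(-1)(-1)
= \bigl(e^{i\theta}D^* - e^{-i\theta}C\bigr)\bigl(e^{-i\theta}D - e^{i\theta}C^*\bigr),
\]
which multiplies out to $D^*D + C C^* - e^{-2i\theta} C D - e^{2i\theta} D^* C^*$. This is $C C^* + D^* D - 2\re(e^{-2i\theta} CD)$. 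I must be careful here: this is $CC^* + D^*D$, not the $H = C^*C + DD^*$ of \eqref{HCD}, and the product is $CD$, whereas \eqref{HCD} has $Z = DC$. The resolution is that $B^*B$ and $BB^*$ have the same nonzero eigenvalues, and it is actually $BB^* \in \C^{(n-k)\times(n-k)}$ we should work with when $k \le n/2$ is being exploited differently — or, more cleanly, one observes that $C^*C + DD^* - 2\re(e^{-2i\theta}DC)$ and $CC^* + D^*D - 2\re(e^{-2i\theta}CD)$ have the same spectrum because $DC$ and $CD$ are similar up to zero eigenvalues and $C^*C \sim CC^*$, $DD^* \sim D^*D$; I would make this precise by passing between $B^*B$ and $BB^*$ and matching the $k$ relevant eigenvalues. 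The expected obstacle is precisely this bookkeeping: getting the $C \leftrightarrow C^*$, $D \leftrightarrow D^*$, $CD \leftrightarrow DC$ transpositions to line up with the asymmetric definitions in \eqref{M}–\eqref{HCD}, and confirming that exactly $k$ eigenvalues $\mu_j(\theta)$ (the nonzero-relevant ones) feed into \eqref{la} while the surplus zero eigenvalues of $M(\theta)$ correspond to the already-accounted eigenvalue $\im(e^{-i\theta}\alpha)$. Everything else is routine.
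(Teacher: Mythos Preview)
Your approach is essentially the paper's: both reduce to a Schur-complement computation on $\im(e^{-i\theta}A)$, the paper via the determinant formula, you via eliminating one block of the eigenvector equation.

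The ``obstacle'' you anticipate, however, is a phantom created by a bookkeeping slip in your final expansion. You correctly need the eigenvalues of the $k\times k$ matrix $B^*B$, but the product you actually expand,
\[
\bigl(e^{i\theta}D^* - e^{-i\theta}C\bigr)\bigl(e^{-i\theta}D - e^{i\theta}C^*\bigr) = CC^*+D^*D - 2\re(e^{-2i\theta}CD),
\]
is the $(n-k)\times(n-k)$ matrix $4BB^*$, not $4B^*B$. Computing $4B^*B$ directly (note $(2i)^{-2}=-\tfrac14$) gives
\[
4B^*B = -\bigl(e^{-i\theta}D - e^{i\theta}C^*\bigr)\bigl(e^{-i\theta}C - e^{i\theta}D^*\bigr)
= C^*C + DD^* - 2\re(e^{-2i\theta}DC) = M(\theta),
\]
which matches the paper's $H=C^*C+DD^*$ and $Z=DC$ on the nose. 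No $CD\leftrightarrow DC$ repair is needed, and your argument then finishes cleanly. As an aside, your proposed repair --- inferring spectral equality of the two expressions from the separate facts $DC\sim CD$, $C^*C\sim CC^*$, $DD^*\sim D^*D$ --- would not be a valid argument, since isospectrality does not pass to sums; the two matrices do share nonzero spectrum, but only because they are $4BB^*$ and $4B^*B$.
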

\begin{proof}As was discussed, it suffices to consider the case $\beta=-\alpha$, in which formula \eqref{la} simplifies
to 	
\eq{la0} \lambda_j=\pm\sqrt{(\im(e^{-i\theta}\alpha))^2+\mu_j(\theta)/4}, \quad j=1,\ldots,k.\en
The result then follows from the Schur complement formula for the determinant, according to which the characteristic polynomial  of $\im(e^{-i\theta}A)$ is \begin{multline*} \det \left((\im(e^{-i\theta}\alpha)-\lambda)I_{n-k}\right)\\
\cdot \det \left((-\im(e^{-i\theta}\alpha)-\lambda)I_k-(\im(e^{-i\theta}\alpha)-\lambda)^{-1}M(\theta)/4\right) \\
= (\im(e^{-i\theta}\alpha)-\lambda)^{n-2k}  \det \left((\lambda^2-(\im(e^{-i\theta}\alpha))^2)I_k-M(\theta)/4\right).\end{multline*} \end{proof}
The eigenvalues $\im(e^{-i\theta}\alpha)$ of $\im(e^{-i\theta}A)$, if actually present (which is the case when $k<n/2$), correspond to the tangent lines of the numerical range generating curve $C(A)$ all passing through $\alpha$. This means simply that in this case the singleton $\{\alpha\}$ is a component of $C(A)$. For $k>n/2$ we just need to replace $\alpha$ with $\beta$ in this reasoning.

The remaining tangent lines form a family which is central symmetric with respect to $(\alpha+\beta)/2$, as \eqref{la} implies. Observe also that $\alpha,\beta\in W(A)$, independent of whether or not $\alpha\in C(A)$. We thus arrive at

\begin{prop}Let $A$ be of the form \eqref{A}. Then $C(A)\setminus\{\alpha,\beta\}$, and thus the numerical range $W(A)$ itself, is central symmetric with respect to $(\alpha+\beta)/2$.  \end{prop}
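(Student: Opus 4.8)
The plan is to normalize the matrix and then read the symmetry off Lemma~\ref{l:eig}. As in the Preliminaries I would assume $\alpha+\beta=0$, so that $\beta=-\alpha$ and $(\alpha+\beta)/2=0$ (recall that $W(A)$ and $C(A)$ both translate under $A\mapsto A+tI$), and also $k\le n/2$; the assertion then reduces to: $C(A)\setminus\{\alpha,\beta\}$, and hence $W(A)$, is invariant under $z\mapsto-z$. The other ingredient is the tangent-line description of $C(A)$ recalled in the Introduction: for each $\theta$, the lines tangent to $C(A)$ and making angle $\theta$ with the positive $x$-axis are exactly the lines $\ell_{\theta,\lambda}=\{z\in\C\colon\im(e^{-i\theta}z)=\lambda\}$, where $\lambda$ runs, with multiplicities, over the eigenvalues of $\im(e^{-i\theta}A)$.

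The central step is then immediate. Since $\im(e^{-i\theta}(-z))=-\im(e^{-i\theta}z)$, the reflection $z\mapsto-z$ carries $\ell_{\theta,\lambda}$ to $\ell_{\theta,-\lambda}$. By Lemma~\ref{l:eig} in the normalized form \eqref{la0}, the eigenvalues of $\im(e^{-i\theta}A)$ are $\im(e^{-i\theta}\alpha)$ with multiplicity $n-2k$, together with the $2k$ real numbers $\pm\sqrt{(\im(e^{-i\theta}\alpha))^2+\mu_j(\theta)/4}$, $j=1,\dots,k$ (these are real because $\im(e^{-i\theta}A)$ is Hermitian), and the latter multiset is invariant under negation. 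Hence the family $\mathcal F$ consisting of the tangent lines that arise from those $2k$ eigenvalues, taken over all $\theta$, is invariant under $z\mapsto-z$, and therefore so is its envelope.

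It remains to separate the component $\{\alpha\}$ and to pass to convex hulls. As noted right after Lemma~\ref{l:eig}, the $(n-2k)$-fold eigenvalue $\im(e^{-i\theta}\alpha)$ corresponds to tangent lines of $C(A)$ all passing through $\alpha$, so (when $k<n/2$) the singleton $\{\alpha\}$ is a component of $C(A)$ and the remainder of $C(A)$ is the envelope of $\mathcal F$, which is central symmetric about $0$; deleting the symmetric point $\beta=-\alpha$ as well keeps it so, which gives the claim for $C(A)\setminus\{\alpha,\beta\}$. For $W(A)$ I would combine $W(A)=\conv C(A)$ with the fact that $\alpha,\beta\in W(A)$ whether or not $\beta\in C(A)$: then $W(A)=\conv\big((C(A)\setminus\{\alpha,\beta\})\cup\{\alpha,-\alpha\}\big)$ is the convex hull of two sets each symmetric about $0$, hence is symmetric about $0$, i.e. about $(\alpha+\beta)/2$ once the normalization is undone.

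I expect the only genuine difficulty to be the bookkeeping in the last step rather than anything analytic. One must take care that the isolated component $\{\alpha\}$ (or $\{\beta\}$, after the permutation used when $k>n/2$) is in general not part of the symmetric envelope and has to be removed by hand, and that deducing the symmetry of $W(A)$ — not merely of $C(A)\setminus\{\alpha,\beta\}$ — really does use the observation that both $\alpha$ and $\beta$ lie in $W(A)$ even when one of them is missing from the generating curve. Everything else is a direct reading of \eqref{la0}.
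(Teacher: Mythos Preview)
Your argument is correct and follows essentially the same route as the paper: the paper likewise normalizes to $\alpha+\beta=0$, reads the $\pm$ symmetry of the $2k$ eigenvalues directly from \eqref{la} (your \eqref{la0}), identifies the leftover $(n-2k)$-fold eigenvalue with the singleton component $\{\alpha\}$, and then notes that $\alpha,\beta\in W(A)$ to pass to the convex hull. Your write-up simply makes the bookkeeping in the last step more explicit than the paper does.
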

Note that for $\alpha=\beta\ (=0)$ this property follows directly from the equality $-A=JAJ$, where $J=\diag[I_{n-k}, -I_k]$, as observed in \cite[Proposition 3.2]{Yeh}.

So, if $W(A)$ is the convex hull of finitely many ellipses, some of them may be centered at $(\alpha+\beta)/2$, with the other coming in pairs, central symmetric with respect to $(\alpha+\beta)/2$, but centered elsewhere.

The following example shows that such pairs can indeed be present.

\begin{Ex} \label{ex:yeh}
 Let in \eqref{A} $\alpha = \beta = 0$, $C = B^* + I$ and $D = -I + B$, where $B$ is a $2$-by-$2$ matrix of the form
    \begin{align*}
        B = \begin{bmatrix}
            a & \sqrt{(1-|a|^2)(1-|c|^2)} \\ 0 & c
        \end{bmatrix},
    \end{align*}
$\abs{a}=\abs{c}\leq 1$ and $\im a = \im c$. Then, as shown in {\em \cite[Theorem 3.14]{Yeh}}, the numerical range of $A$ is a convex hull of two ellipses that are centered away from the origin.
    \begin{figure}[H]
    \centering
    \includegraphics[scale=0.4]{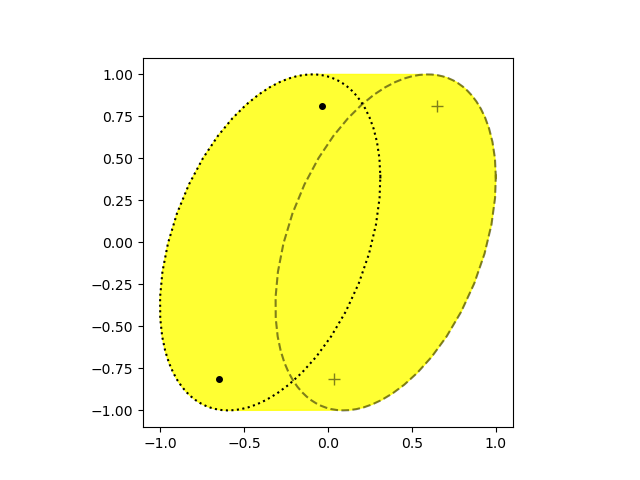}
    \includegraphics[scale=0.4]{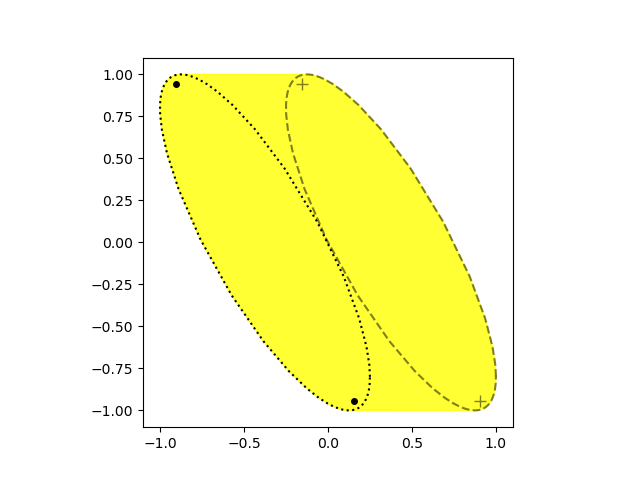}
    \caption{$W(A)$ for example~\ref{ex:yeh}. On the left $a =  c = \frac{1}{2} + \frac{i}{4}$. On the right $a = c  = -\frac{i}{2}$.}
    \label{fig:yehEx}
    \end{figure}
\end{Ex}

In this paper, however, we will restrict our attention to the case when such pairs do not materialize, that is, all the ellipses forming $C(A)$ are co-centered.
The following technical lemma will therefore be instrumental.

\begin{lem}\label{l:cel} Given a matrix $A\in\C^{n\times n}$, its numerical range $W(A)$ is the convex hull of
$m$ ellipses centered at the origin if and only if for some partition {$\{\gamma_j\}_{j=1}^m$} of the unit circle $\T$, the maximal eigenvalue $\lambda_{\max}(\theta)$ of $\im(e^{-i\theta}A)$ satisfies
\eq{lam} \lambda^2_{\max}(\theta)=A_j\cos(2\theta)+B_j\sin(2\theta)+C_j, \quad e^{i\theta}\in\gamma_j,\en
for some { distinct triples} $\{A_j,B_j,C_j\}$ { of positive numbers such that}  $C_j>\sqrt{A_j^2+B_j^2}$, $j=1,\ldots,m$.
\end{lem}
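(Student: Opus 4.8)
The plan is to work with the support function of $W(A)$, namely $h(\phi):=\max\{\re(e^{-i\phi}z):z\in W(A)\}=\lambda_{\max}(\re(e^{-i\phi}A))$, and to use the elementary identity $\re(e^{-i(\theta+\pi/2)}A)=\im(e^{-i\theta}A)$, so that $h(\theta+\pi/2)$ equals the maximal eigenvalue $\lambda_{\max}(\theta)$ of $\im(e^{-i\theta}A)$. The one ingredient I need about a single ellipse is that a nondegenerate ellipse $E$ centered at the origin is precisely a set $\{G^{1/2}u:\norm{u}\le 1\}$ with $G$ a positive definite real symmetric $2\times 2$ matrix, in which case $h_E(\phi)^2=\scal{Gu_\phi,u_\phi}$ (with $u_\phi=(\cos\phi,\sin\phi)^T$) is a trigonometric polynomial of the form $A\cos2\phi+B\sin2\phi+C$, and $G\succ 0$ is equivalent to $C>\sqrt{A^2+B^2}$; conversely every such polynomial arises this way. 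Replacing $\phi$ by $\phi+\pi/2$ only flips the signs of $A$ and $B$, preserving the condition. Since the support function of a convex hull of a union is the pointwise maximum of the support functions, and $h_E\ge 0$ whenever $0\in E$, the lemma reduces to the equivalence: $W(A)=\conv(E_1\cup\dots\cup E_m)$ with $E_j$ nondegenerate ellipses centered at $0$ if and only if $\lambda_{\max}(\theta)^2=\max_{1\le j\le m}(A_j\cos2\theta+B_j\sin2\theta+C_j)$ for distinct triples with $C_j>\sqrt{A_j^2+B_j^2}$, the sets $\gamma_j$ then being the arcs on which the respective expression realizes the maximum.

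For the ``only if'' direction I would first delete redundancies --- merge coinciding ellipses and discard any $E_j$ lying in the convex hull of the others --- so that the survivors are distinct and irredundant; then $h=\max_j h_{E_j}$, squaring and shifting by $\pi/2$ yields the displayed formula with distinct triples, and irredundancy forces each maximum-realizing arc $\gamma_j$ to be nonempty (padding with repeated ellipses recovers exactly $m$ if one insists on it).

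The substance is in the ``if'' direction. Each $q_j(\theta):=A_j\cos2\theta+B_j\sin2\theta+C_j$ is strictly positive, so $\lambda_{\max}(\theta)=\pm\sqrt{q_j(\theta)}$ is continuous and nowhere zero; as $\im(e^{-i\theta}A)$ and $\im(e^{-i(\theta+\pi)}A)$ are negatives of each other, $\lambda_{\max}$ cannot be negative everywhere, hence $\lambda_{\max}>0$ on $\T$, so $0$ is interior to $W(A)$ and $h(\phi)^2=q_j(\phi-\pi/2)$ on the corresponding arcs. By the one-ellipse reduction each $\phi\mapsto\sqrt{q_j(\phi-\pi/2)}$ is the support function of a nondegenerate ellipse $E_j$ centered at $0$, and $h(\phi)=h_{E_{j(\phi)}}(\phi)\le\max_j h_{E_j}(\phi)$ already gives $W(A)\subseteq\conv(\bigcup_j E_j)$. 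For the reverse inclusion it suffices to show $E_j\subseteq W(A)$ for every $j$. After absorbing any nowhere-dense $\gamma_j$ into a neighbouring part (at its points, by continuity, $q_j$ agrees with some other $q_i$, so the defining equation is not lost), each surviving $\gamma_j$ contains a nonempty open arc $J_j$ on which $h$ is real analytic and equal to $h_{E_j}$; hence $h$ and $h_{E_j}$, together with their derivatives, coincide on $J_j$, so $W(A)$ and $E_j$ share the same supporting point (unique, since $E_j$ is strictly convex) for each direction in $J_j$. These points trace a positive-length sub-arc of $\partial E_j$, all of whose points are exposed, hence extreme, points of $W(A)$, and therefore --- since $W(A)=\conv C(A)$ --- lie on the boundary generating curve $C(A)$.

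The main obstacle, and the crux of the argument, is to pass from ``an arc of $\partial E_j$ lies on $C(A)$'' to ``$\partial E_j\subseteq C(A)$''. Here I would use that, as recalled in the Introduction, $C(A)$ is a real algebraic curve (of degree at most $n(n-1)$): since $\partial E_j$ is an irreducible conic meeting the algebraic curve $C(A)$ in infinitely many points, B\'ezout's theorem forces $\partial E_j$ to be an irreducible component of $C(A)$. Consequently $\partial E_j\subseteq C(A)\subseteq W(A)$, and by convexity $E_j=\conv(\partial E_j)\subseteq W(A)$. Together with the easy inclusion this gives $W(A)=\conv(\bigcup_j E_j)$, exhibiting $W(A)$ as the convex hull of $m$ ellipses centered at the origin. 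I expect the only delicate points to be the bookkeeping with redundant and nowhere-dense pieces and the careful invocation of the algebraicity of $C(A)$; everything else is routine.
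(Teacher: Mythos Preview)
Your argument is correct and shares its skeleton with the paper's own proof: both identify $\lambda_{\max}(\theta)$ with the value of the support function of $W(A)$ in the direction $ie^{i\theta}$, and both rest on the single-ellipse computation (attributed in the paper to \cite{ChienHung}) that the squared support function of a nondegenerate centered ellipse is precisely $A\cos 2\theta+B\sin 2\theta+C$ with $C>\sqrt{A^2+B^2}$. The paper then simply asserts that \eqref{lam} is equivalent to $\partial W(A)$ consisting of arcs of $m$ such ellipses, possibly joined by line segments, and treats this as synonymous with the desired conclusion. Your handling of the ``if'' direction goes further: after matching $h$ with $h_{E_j}$ on an open sub-arc of $\gamma_j$, you locate an arc of $\partial E_j$ among the extreme points of $W(A)$, place it on the algebraic curve $C(A)$, and invoke B\'ezout to force the entire conic $\partial E_j$ to be an irreducible component of $C(A)\subseteq W(A)$, whence $E_j\subseteq W(A)$. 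This algebraicity step is a genuine addition --- it exploits the numerical-range structure rather than bare convexity --- and it supplies exactly the implication (from ``an arc of $\partial E_j$ lies on $\partial W(A)$'' to ``$E_j\subseteq W(A)$'') that the paper's sketch leaves implicit. In the paper's downstream applications the issue is moot anyway, since there one always obtains $\lambda_{\max}^2(\theta)=\max_j q_j(\theta)$ directly, making the reverse inclusion $E_j\subseteq W(A)$ automatic via $h_{E_j}\le\max_i h_{E_i}=h$.
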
\label{l:ell}
{ Before passing to the proof of Lemma~\ref{l:cel}, let us clarify that for $m=1$ the only element of the partition is the whole circle $\T$, while for $m>1$ each $\gamma_j$ is a pair of central symmetric circular arcs, due to the central symmetry of all the ellipses involved.}
{
\begin{proof}
 From the description of $C(A)$ it easily follows (and also is well known)  that the supporting line of $W(A)$ in the direction of $e^{i\theta}$ is passing through the point $e^{i(\theta+\pi/2)}\lambda_{\max}\left(\im(e^{-i\theta}A)\right)$. A direct computation (performed, e.g., in the proof of \cite[Theorem 1]{ChienHung}, where the case $m=1$ was treated)
shows that the boundary of $W(A)$ contains an arc $E$ of the ellipse with the half-axes $\sqrt{C\pm\sqrt{A^2+B^2}}$
and the major axis forming angle $-\frac{1}{2}\tan^{-1}\frac{B}{A}$ with the positive direction of the $x$-axis if and only if
\[ \lambda^2_{\max}(\theta)=A\cos(2\theta)+B\sin(2\theta)+C\]
for all the angles $\theta$ formed by the tangent lines of the elliptical arc $E$ with the positive direction of the $x$-axis.
Condition \eqref{lam} therefore holds if and only if the boundary of $W(A)$ consists of elliptical arcs of exactly $m$ ellipses, perhaps connected by line segments.
\end{proof}
}

\section{Main results} \label{s:main}

 We will now consider several cases in which the eigenvalues of  the matrices $M(\theta)$  given by \eqref{M} can be computed explicitly, thus yielding a full description of $C(A)$ and, ultimately, of $W(A)$.  All the results will be stated in terms of matrices $H,Z$ defined by \eqref{HCD}.

 One such case materializes if the matrix $Z$ is normal and commutes with $H$. These matrices can be then diagonalized by the same (unitary) similarity. Let us label the eigenvalues $h_j,z_j$ of $H$ and $Z$, according to the order in which they appear on the respective diagonals.

 \begin{thm}\label{th:tri}Let the off-diagonal blocks of \eqref{A} be such that in \eqref{HCD} the matrix $Z$ is normal and commutes with $H$. Then $W(A)$ is the convex hull of at most $k$ ellipses.

Moreover, each ellipse associated with a particular $\mu_j(\theta)$ is centered at $(\alpha + \beta)/2$ with a major axis parallel to $e^{-i \phi_j}$ of length $2a_j$ and minor axis of $2b_j$ which are given by
\begin{equation*}
    a_j = \left( \frac{|\gamma|^2}{2} + \frac{h_j}{4} + \frac{1}{2}|
    \gamma^2 + z_j| \right)^{\frac{1}{2}}, \quad  b_j =  \left( \frac{|\gamma|^2}{2} + \frac{h_j}{4} - \frac{1}{2}|\gamma^2 + z_j| \right)^{\frac{1}{2}}.
\end{equation*}
Here $\phi_j$ is the principal argument of $i\sqrt{\gamma^2 + z_j}$.

\end{thm}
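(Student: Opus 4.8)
The plan is to feed Lemma~\ref{l:eig} into Lemma~\ref{l:cel}. Replacing $A$ by $A-\tfrac{\alpha+\beta}{2}I$ we may assume $\beta=-\alpha$, so that $\gamma=\alpha$ and the prospective common center becomes the origin; this leaves $H$ and $Z$ of \eqref{HCD} unchanged, and we take $k\le n/2$ as permitted. Since $Z$ is normal and commutes with $H$, one unitary $U$ diagonalizes both and hence also diagonalizes $M(\theta)=H-2\re(e^{-2i\theta}Z)$, so that $\mu_j(\theta)=h_j-2\re(e^{-2i\theta}z_j)$. Inserting this into \eqref{la0} and using $(\im w)^2=\tfrac12|w|^2-\tfrac12\re(w^2)$ with $w=e^{-i\theta}\alpha$, the square of each of the $2k$ eigenvalues in \eqref{la0} equals
\[
f_j(\theta):=\frac{|\gamma|^2}{2}+\frac{h_j}{4}-\frac{1}{2}\re(e^{-2i\theta}(\gamma^2+z_j))=A_j\cos 2\theta+B_j\sin 2\theta+C_j ,
\]
where $C_j=\tfrac{|\gamma|^2}{2}+\tfrac{h_j}{4}$ and $A_j+iB_j=-\tfrac12(\gamma^2+z_j)$, so that $\sqrt{A_j^2+B_j^2}=\tfrac12|\gamma^2+z_j|$.

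The one genuinely substantive ingredient is the elementary bound $h_j\ge 2|z_j|$. Writing $u_j$ for the $j$-th column of $U$, one has $h_j=\scal{Hu_j,u_j}=\norm{Cu_j}^2+\norm{D^*u_j}^2$ and $z_j=\scal{Zu_j,u_j}=\scal{Cu_j,D^*u_j}$, whence $|z_j|\le\norm{Cu_j}\,\norm{D^*u_j}\le\tfrac12 h_j$ by Cauchy--Schwarz. This has two consequences. First, $\mu_j(\theta)\ge h_j-2|z_j|\ge 0$, i.e.\ $M(\theta)\succeq 0$; since also $f_j(\theta)\ge(\im(e^{-i\theta}\alpha))^2$ by \eqref{la0}, the largest eigenvalue of $\im(e^{-i\theta}A)$ must be $\max_j\sqrt{f_j(\theta)}$ (it dominates the remaining eigenvalue $\im(e^{-i\theta}\alpha)$ of multiplicity $n-2k$), so $\lambda_{\max}^2(\theta)=\max_j f_j(\theta)$. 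Second, $C_j=\tfrac{|\gamma|^2}{2}+\tfrac{h_j}{4}\ge\tfrac{|\gamma|^2}{2}+\tfrac{|z_j|}{2}\ge\tfrac12|\gamma^2+z_j|=\sqrt{A_j^2+B_j^2}$, strictly unless the corresponding ellipse degenerates ($b_j=0$); so each $f_j$ satisfies the separation condition of Lemma~\ref{l:cel}, degenerate (segment-shaped) ellipses being allowed exactly as in the Elliptical Range theorem.

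Next I would note that $\lambda_{\max}^2(\theta)=\max_j f_j(\theta)$ is the upper envelope of $k$ sinusoids in $2\theta$: any two $f_i,f_j$ with distinct triples agree at only finitely many $\theta$, so after merging indices with identical triples the circle $\T$ splits into finitely many open arcs, on each of which a single $f_{j(\theta)}$ is the strict maximum; collecting the arcs by their index (and discarding indices that never win) yields a partition $\{\gamma_j\}_{j=1}^m$ with $m\le k$, automatically of the central-symmetric form Lemma~\ref{l:cel} requires since every $f_j$ is $\pi$-periodic. Lemma~\ref{l:cel} then gives at once that $W(A)$ is the convex hull of $m\le k$ ellipses centered at the origin, i.e.\ at $(\alpha+\beta)/2$ in the original coordinates. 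The half-axes furnished by Lemma~\ref{l:cel} are $a_j=\sqrt{C_j+\sqrt{A_j^2+B_j^2}}$ and $b_j=\sqrt{C_j-\sqrt{A_j^2+B_j^2}}$, which are precisely the asserted formulas, while the orientation is fixed by the phase of $A_j+iB_j=-\tfrac12(\gamma^2+z_j)$; carrying that phase through the normalization inside the proof of Lemma~\ref{l:cel} puts the major axis in the stated direction $e^{-i\phi_j}$, $\phi_j=\arg(i\sqrt{\gamma^2+z_j})$.

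I expect the main obstacle to be bookkeeping rather than anything conceptual, and it is concentrated in the orientation: the axis angle emerges from $\arg(\gamma^2+z_j)$, modified by the factor $\tfrac12$ coming from the substitution $\theta\mapsto2\theta$ and by the right-angle rotations relating the direction of a supporting line, the direction of an axis, and the generating curve, and it is the step most prone to sign slips. I would fix it, and cross-check the whole formula, against the Elliptical Range theorem in the case $n=2$, $k=1$: there $A$ has eigenvalues $\pm\sqrt{\gamma^2+z_1}$, and in general the foci of the $j$-th ellipse come out at $\pm\sqrt{\gamma^2+z_j}$ (two eigenvalues of $A$), since the focal half-distance is $\sqrt{a_j^2-b_j^2}=\sqrt{2\sqrt{A_j^2+B_j^2}}=|\gamma^2+z_j|^{1/2}$ along the major-axis direction --- a convenient consistency check. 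The only other nuisance is the degenerate case $b_j=0$ (which occurs, for instance, whenever $h_j=0$, in particular for Hermitian $A$ of the form \eqref{A}): there the $j$-th ellipse is the segment joining the eigenvalues $\pm\sqrt{\gamma^2+z_j}$ of $A$, and it contributes no extreme points to $W(A)$ beyond those already supplied by the non-degenerate ellipses and the points $\alpha,\beta$.
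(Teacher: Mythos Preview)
Your proposal is correct and follows essentially the same route as the paper: reduce to $\beta=-\alpha$, simultaneously diagonalize $H$ and $Z$ to compute $\mu_j(\theta)$, rewrite $\lambda_j^2$ in the sinusoidal form of Lemma~\ref{l:cel}, and read off the ellipse parameters. The paper's own proof is in fact terser and simply asserts that the computed $\lambda_j^2$ corresponds to an ellipse via Lemma~\ref{l:cel}; your Cauchy--Schwarz argument yielding $h_j\ge 2|z_j|$ (hence $C_j\ge\sqrt{A_j^2+B_j^2}$, and $\lambda_{\max}$ dominating the residual eigenvalue $\im(e^{-i\theta}\alpha)$) supplies a verification step the paper leaves implicit, and your discussion of the partition, the orientation, and the degenerate case $b_j=0$ likewise fills in details the paper omits.
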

\begin{proof}
Under the conditions imposed on $H$ and $Z$, the matrix \eqref{M} is diagonalizable for all values of $\theta$ by the same similarity as $Z$ and $H$, implying that
\eq{muj}
        \mu_j(\theta) = h_j - 2 \re (e^{-2i\theta} z_j) = h_j - 2\re(z_j)\cos 2\theta - 2\im (z_j) \sin 2\theta.
\en
According to Lemma~\ref{l:eig} the equation of $\lambda_j(\theta)$ corresponding to $A-((\alpha + \beta)/2)I$ is given by:
\begin{align*}
    \lambda_j^2
    &
    =
    \im \left( e^{-i\theta} \gamma  \right)^2 + \mu_j(\theta)/4
    \\
    &
    =
    \im(\gamma)^2 \cos^2 \theta + \re(\gamma)^2 \sin^2 \theta  - \re(\gamma)\im(\gamma)\sin 2 \theta + \mu_j(\theta)/4
    \\
    &
    =
    \frac{|\gamma|^2}{2} + \frac{h_j}{4} - \frac{\re(z_j + \gamma^2)}{2} \cos 2 \theta
    - \frac{\im(z_j + \gamma^2)}{2} \sin 2 \theta.
\end{align*}
By Lemma~\ref{l:ell} the above corresponds to an ellipse centered at the origin with minor and major axes as described in the statement. Translating these ellipses by $(\alpha + \beta)/2$ we obtain the result.
\end{proof}
\begin{rem} Proof of Theorem~\ref{th:tri} goes through if we merely suppose that the matrices $Z,Z^*$ and $H$ can
be put in a triangular form by a simultaneous similarity. However, this (formally weaker) requirement in fact implies that $Z$ is normal and commutes with $H$. \end{rem}

Conditions of Theorem~\ref{th:tri} hold in particular when $Z$ is a scalar multiple of the identity matrix. Then all $z_j$ are the same, implying that the ellipses described by Theorem~\ref{th:tri} form a nested family. Therefore, independent of the value of $k$, $W(A)$ is an elliptical disk. More specifically:
\begin{cor}\label{co:DC}Let the off-diagonal blocks of \eqref{A} be such that $DC=cI$. Then $W(A)$ is
an elliptical disk centered at $(\alpha +\beta)/2$ with  the axes of length
$\sqrt{\norm{H}+2\abs{\gamma}^2\pm 2\abs{\gamma^2 +c}}$, and the major axis parallel to $e^{-i \phi}$, where \iffalse of length  $\sqrt{\mu +2\abs{\gamma}^2 + 2\abs{\gamma^2 + c^2}}$ and minor axis of length $\sqrt{\mu +2\abs{\gamma}^2 - 2\abs{\gamma^2 + c^2}}}$. Here $\mu=\norm{H}$, $\gamma = (\alpha-\beta)/2$, and\fi  $\phi$ is the (principal) argument of $i\sqrt{\gamma^2 + c}$.
\end{cor}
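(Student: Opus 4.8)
The plan is to derive Corollary~\ref{co:DC} as a direct specialization of Theorem~\ref{th:tri}. First I would observe that if $DC=cI$, then $Z=DC=cI$ is certainly normal and commutes with any matrix, in particular with $H=C^*C+DD^*$, so the hypotheses of Theorem~\ref{th:tri} are met and $W(A)$ is the convex hull of at most $k$ ellipses. The key point is then that all the eigenvalues $z_j$ of $Z$ coincide, $z_j=c$ for every $j$, so the quantities $\abs{\gamma^2+z_j}$ and the arguments $\phi_j$ appearing in Theorem~\ref{th:tri} are the same for all $j$; only the $h_j$ vary. Hence all the ellipses share a common center $(\alpha+\beta)/2$ and a common direction $e^{-i\phi}$ of the major axis, and differ only through the additive term $h_j/4$ inside both half-axis formulas.

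Next I would argue that such a family of concentric, coaxial ellipses is nested: the half-axes are $a_j=(\tfrac{\abs{\gamma}^2}{2}+\tfrac{h_j}{4}+\tfrac12\abs{\gamma^2+c})^{1/2}$ and $b_j=(\tfrac{\abs{\gamma}^2}{2}+\tfrac{h_j}{4}-\tfrac12\abs{\gamma^2+c})^{1/2}$, and both are strictly increasing functions of $h_j$. Therefore the ellipse with the largest $h_j$, namely $h_j=\norm{H}$ (recall $H$ is positive semidefinite, so its largest eigenvalue equals its norm), contains all the others. Consequently the convex hull of the whole family equals the single elliptical disk corresponding to $h_j=\norm{H}$, which has half-axes $a=(\tfrac{\norm{H}}{4}+\tfrac{\abs{\gamma}^2}{2}+\tfrac12\abs{\gamma^2+c})^{1/2}$ and $b=(\tfrac{\norm{H}}{4}+\tfrac{\abs{\gamma}^2}{2}-\tfrac12\abs{\gamma^2+c})^{1/2}$; multiplying through by $4$ inside the radical gives the axis lengths $\sqrt{\norm{H}+2\abs{\gamma}^2\pm2\abs{\gamma^2+c}}$ stated in the corollary, with major axis parallel to $e^{-i\phi}$, $\phi=\arg(i\sqrt{\gamma^2+c})$.

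I should also check the degenerate possibility that $\gamma^2+c=0$, in which case $a=b$ and the "ellipse'' is a circular disk; the formula still reads correctly, the direction $\phi$ being immaterial. The only genuine point requiring a word of justification is the nestedness claim — that two concentric ellipses with the same axis directions and with both half-axes ordered the same way are contained one in the other — but this is elementary: in the rotated coordinates aligned with the common axes, each ellipse is $\{x^2/a_j^2+y^2/b_j^2\le1\}$, and $a_j\le a_{j'},\ b_j\le b_{j'}$ immediately gives containment. I do not anticipate a serious obstacle here; the entire argument is a matter of specializing the already-proved Theorem~\ref{th:tri} and recording the monotonicity in $h_j$.
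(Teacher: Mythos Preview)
Your proposal is correct and follows essentially the same route as the paper: the paper simply notes that when $Z=cI$ all $z_j$ coincide, so the ellipses of Theorem~\ref{th:tri} are nested and $W(A)$ reduces to the single outermost one. You have supplied the details the paper leaves implicit---the monotonicity of both half-axes in $h_j$, the identification $\max_j h_j=\norm{H}$ via positive semidefiniteness of $H$, and the elementary containment argument in aligned coordinates---but the underlying idea is identical.
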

The description of $W(A)$ simplifies even further when $c=0$, i.e., $DC=0$. The axes of the ellipse $W(A)$ then have length $\sqrt{\norm{H}^2+4\abs{\gamma}^2}$ and $\norm{H}$.

Condition $DC=0$ in geometrical terms may be recast as the kernel of $D$ containing the range of $C$. This happens, in particular, when the $j$-th row of $C$ consists of all zeros provided that the $j$-th column of $D$ is non-zero. This covers Scenario~2 from \cite[Theorem 7]{ChienHung}, corresponding to $j=1$.
	
Of course, $DC=0$ when $D=0$. The latter case is listed as Scenario~1 in \cite[Theorem 7]{ChienHung}. Note however that all quadratic matrices (i.e., matrices with minimal polynomial of degree 2) can be reduced to this case via a unitary similarity --- the approach used in \cite{TsoWu} when proving the ellipticity of the numerical ranges of such matrices.

Another setting obviously guaranteeing $DC=cI$ is when $k=1$. The ellipticity of $W(A)$ in this case was proved in \cite{Linden}, see also \cite[Theorem 2]{ChienHung}.

Yet another situation in which conditions of Theorem~\ref{th:tri} hold is when there exist unitary $U\in\C^{k\times k}$ and $V\in\C^{(n-k)\times(n-k)}$ such that $UDV$ and $UC^*V$ are both diagonal. Indeed, then matrices \eqref{HCD} are simultaneously diagonalizable under the unitary similarity $X\mapsto UXU^*$. This case, along with a number of situations reducing to it, was extensively treated in \cite{BS04}. It was also observed in \cite{BS04} that the existence of such $U,V$ is equivalent to normality of two matrices, $CD$ and $DC$, see, e.g., \cite[p. 426]{HJ1}. The normality of $DC$ of course holds when $k=1$ while there is no reason for $CD$ to also be normal.

Situations with $Z=DC$ being normal and commuting with $H$, while $CD$ is not normal, do materialize in more interesting settings as well. Consider for example the case $n=4, k=2$,
\[ D=\begin{bmatrix}\xi & \eta \\ \overline{\eta} & \zeta\end{bmatrix}U \text{ and } C=D^{-1}\begin{bmatrix}\omega & 0\\ 0 & \overline{\omega}^{-1}\end{bmatrix}, \]	
with $U\in\C^{2\times 2}$ unitary, $\xi,\zeta>0$, $\xi\zeta-\abs{\eta}^2=1$, $\xi\neq\zeta$, and $\abs{\omega}\neq 1$.

Then \[ Z=DC=\begin{bmatrix}\omega & 0\\ 0 & \overline{\omega}^{-1}\end{bmatrix} \]
is normal, while a direct computation shows that $H= Z^*(DD^*)^{-1}Z+DD^*$ is diagonal, and therefore commutes with $Z$.
At the same time $CD=D^{-1}ZD$ is unitarily similar to
\[ \begin{bmatrix}\zeta & -\eta \\ -\overline{\eta} & \xi\end{bmatrix}Z\begin{bmatrix}\xi & \eta \\ \overline{\eta} & \zeta\end{bmatrix}=\begin{bmatrix} * & \eta\zeta(\omega-\overline{\omega}^{-1}) \\  -\overline{\eta}\xi(\omega-\overline{\omega}^{-1}) & * \end{bmatrix}. \]
The off-diagonal entries of the resulting matrix differ in magnitude, precluding it from being normal. Consequently, the matrix $CD$ is not normal either.

Here is a particular numerical example, to illustrate.

\begin{Ex} \label{ex:HZcom}
    Consider $C$ and $D$ as follows:
    \begin{align*}
        C = \frac{1}{2}
        \begin{bmatrix}
        8  & -1 \\ -4 & 1
        \end{bmatrix}
        \quad
        D =
        \begin{bmatrix}
        1  & 1 \\ 1 & 2
        \end{bmatrix}
    \end{align*}
Then
   \[
DC = \frac{1}{2}\begin{bmatrix} 4  & 0 \\ 0 & 1 \end{bmatrix}   \text{ is normal, while for }
CD = \frac{1}{2} \begin{bmatrix} 7 & 6 \\ -3 & -2 \end{bmatrix}
  \]
 we have
\[ (CD)^*CD - CD(CD)^* = \frac{1}{4} \begin{bmatrix} -27  & 81 \\ 81 & 27\end{bmatrix} \neq 0.  \]
 Finally, for this choice of $C,D$: \[
H= \frac{1}{4}\begin{bmatrix}8 & -4 \\ -1 & 1\end{bmatrix}\begin{bmatrix}8 & -1 \\ -4 & 1\end{bmatrix}+
\begin{bmatrix}1 & 1 \\ 1 & 2\end{bmatrix}^2=\begin{bmatrix}22 & 0\\ 0 & 5.5\end{bmatrix}.	\]
is indeed diagonal.

    \begin{figure}[H]
    \centering
    \includegraphics[scale=0.4]{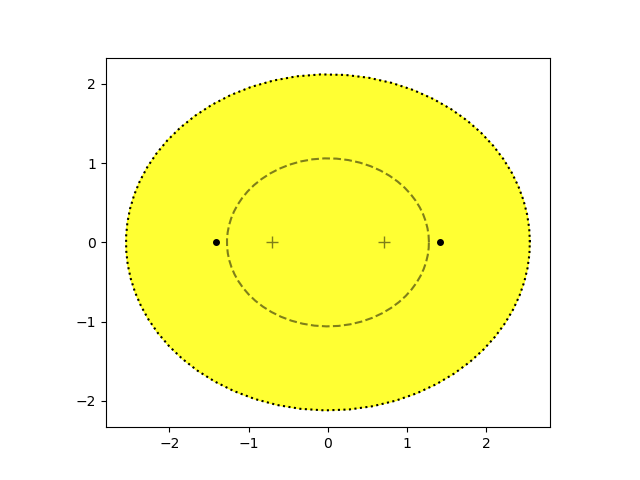}
    \includegraphics[scale=0.4]{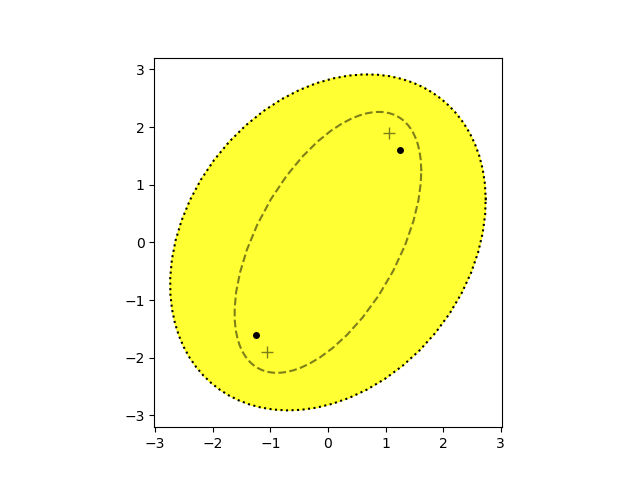}
    \caption{$W(A)$ from Example~\ref{ex:HZcom}; $\alpha = \beta = 0$ on the left and $\alpha = -\beta = 1 + 2i$ on the right. }
    \label{fig:HZcomEll}
    \end{figure}
\end{Ex}


As can be seen from Corollary~\ref{co:DC}, in the setting of Theorem~\ref{th:tri} the number $m(A)$ of ellipses actually involved in generating $W(A)$ may be strictly smaller than its upper bound $k$ (unless, of course, $k=1$). A moment's thought reveals that $m(A)$ is the number of distinct functions \eqref{muj} attaining the maximal (with respect to $j$) value while $\theta$ varies in $[0,\pi]$.

Determining the value of $m(A)$ in the setting of Theorem~\ref{th:tri} without imposing additional restrictions on the matrices \eqref{HCD} may be rather challenging. Here is yet another case in which it can be done. Recall that a matrix $X$ is {\em essentially Hermitian} if it is normal with a spectrum lying on a line (equivalently: if $W(X)$ has empty interior).

\begin{cor}\label{co:tri}
Let $Z$ be essentially Hermitian.  Suppose in addition that $H$ is a scalar multiple of the identity, while $Z$ is not.
Then $W(A)$ is the convex hull of two non-nested ellipses, both centered at $(\alpha + \beta)/2$ with their major (and thus minor) axes forming the right angle.
\end{cor}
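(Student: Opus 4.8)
The plan is to specialize the general description of Theorem~\ref{th:tri} to the hypotheses at hand. Since $Z$ is essentially Hermitian, it is normal and its spectrum $\{z_1,\dots,z_k\}$ lies on a line in $\C$; since $H$ is a scalar multiple of the identity, $Z$ trivially commutes with $H$, so Theorem~\ref{th:tri} applies and $W(A)$ is the convex hull of at most $k$ ellipses, all co-centered at $(\alpha+\beta)/2$, with half-axes
\[ a_j=\Bigl(\tfrac{|\gamma|^2}{2}+\tfrac{h}{4}+\tfrac12|\gamma^2+z_j|\Bigr)^{1/2},\qquad b_j=\Bigl(\tfrac{|\gamma|^2}{2}+\tfrac{h}{4}-\tfrac12|\gamma^2+z_j|\Bigr)^{1/2}, \]
where $h$ is the common eigenvalue of $H$, and major-axis direction $e^{-i\phi_j}$ with $\phi_j=\arg\bigl(i\sqrt{\gamma^2+z_j}\bigr)$.

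First I would reduce, via a shift, to $\alpha+\beta=0$, so $\gamma=\alpha$ and the ellipses are centered at the origin; it suffices to show exactly two of them are non-nested with perpendicular major axes. The key observation is that the shape and orientation of the $j$-th ellipse depend on $z_j$ only through the complex number $w_j:=\gamma^2+z_j$: the half-axes depend on $|w_j|$ and the major-axis direction on $\arg(i\sqrt{w_j})$, i.e.\ on $\tfrac12\arg w_j$ (mod $\pi$). Because $Z$ is essentially Hermitian but not scalar, the $z_j$ lie on a genuine line segment with at least two distinct endpoints; hence the points $w_j=\gamma^2+z_j$ lie on a line segment $\ell$ in $\C$ that is not a single point. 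The maximal eigenvalue $\lambda^2_{\max}(\theta)$ is, at each $\theta$, the maximum over $j$ of $\tfrac{|\gamma|^2}{2}+\tfrac{h}{4}-\tfrac12\re(e^{-2i\theta}w_j)$, so only the $w_j$ that are extreme points of $\conv\{w_j\}$ — i.e.\ the two endpoints of $\ell$ — can contribute to $W(A)$; all interior $z_j$ give ellipses contained in the convex hull of the two extremal ones. So $m(A)\le 2$, and I must then rule out $m(A)=1$: if the two endpoints $w_-,w_+$ gave the same ellipse we would need $|w_-|=|w_+|$ and $\tfrac12\arg w_-\equiv\tfrac12\arg w_+\pmod\pi$, forcing $w_-=\pm w_+$; but then $\re(e^{-2i\theta}w_-)$ and $\re(e^{-2i\theta}w_+)$ coincide for all $\theta$ only in the $+$ case $w_-=w_+$, contradicting $w_-\ne w_+$, while $w_-=-w_+$ still produces two distinct supporting-line families hence two distinct ellipses (this is where I must be slightly careful). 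Thus $m(A)=2$.

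It remains to check the two ellipses are non-nested and have perpendicular major axes. Perpendicularity: write $w_\pm$ on the line $\ell$; the major-axis angles are $\phi_\pm=\tfrac\pi2-\tfrac12\arg w_\pm$ (mod $\pi$), so the axes make the angle $\tfrac12|\arg w_+-\arg w_-|$. Here I would use that the two endpoints of a segment not through the origin, together with the origin, can be arranged — after the harmless shift and the fact that $\gamma^2$ merely translates the segment — so that $\arg w_+-\arg w_-=\pi$ precisely when $w_-,w_+$ are negatives of each other through... wait, more honestly: the right angle between major axes corresponds to $\arg w_+ - \arg w_- = \pi$, i.e.\ $w_+$ and $w_-$ on opposite rays from the origin. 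I would argue this is forced by essential Hermiticity: the spectrum of $Z$ lies on a line, and after subtracting the scalar part the relevant quantity $\gamma^2+z_j$ sweeps a segment whose endpoints, by the extremality argument above being the only contributors, must sit symmetrically; combined with the eigenvalue formula \eqref{la0} and Lemma~\ref{l:ell}, perpendicularity drops out. Non-nestedness then follows because perpendicular co-centered ellipses that are not equal cannot be nested (a nested pair of co-centered ellipses with the inner one's major axis along the outer one's minor axis would need the inner major half-axis $\le$ outer minor half-axis, and one checks from the explicit $a_j,b_j$ that $a_+\le b_-$ and $a_-\le b_+$ cannot both hold unless $w_+=w_-$).

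The main obstacle is the perpendicularity claim: it hinges on pinning down the precise geometric configuration of $\{\gamma^2+z_j\}$ forced by "$Z$ essentially Hermitian, not scalar, $H$ scalar," and showing the two surviving ellipses correspond to $w_\pm$ with $\arg w_+-\arg w_-=\pi$ exactly. I expect this to require a short but careful argument using that $\lambda^2_{\max}(\theta)$ must satisfy \eqref{lam} piecewise with the partition of $\T$ into two pairs of arcs, together with the continuity of $\lambda_{\max}$; the algebra relating the line containing $\operatorname{spec}(Z)$ to the two half-axis triples $(A_j,B_j,C_j)$ is routine once the configuration is identified.
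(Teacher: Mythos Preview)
Your argument for $m(A)\le 2$ via the extreme points of the segment $\{w_j=\gamma^2+z_j\}$ is correct and in fact already contains non-nestedness: each endpoint $w_\pm$ is the unique maximizer of the linear functional $w\mapsto -\re(e^{-2i\theta}w)$ on an open arc of $\theta$, so each of the two ellipses contributes to $\partial W(A)$ on a non-degenerate arc and neither can be contained in the other. You do not need perpendicularity for this, and your later attempt to derive non-nestedness from perpendicularity is circular.

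The genuine gap is the perpendicularity claim as you have formulated it. You translate ``major axes forming a right angle'' into $\arg w_+-\arg w_-\equiv\pi$, and then try to argue that essential Hermiticity forces $w_+,w_-$ onto opposite rays through the origin. It does not: take $\gamma=0$ and $Z$ Hermitian with eigenvalues $1$ and $2$, so $w_-=1$, $w_+=2$; both ellipses then have $\phi_j=\arg(i\sqrt{w_j})=\pi/2$, i.e.\ parallel major axes. So the assertion you are trying to prove is false, and no ``short but careful argument'' will rescue it.

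What the paper actually establishes is different and much easier. Working with \eqref{muj} and $h_j\equiv h$, one has
\[
\mu_s(\theta)-\mu_t(\theta)=2\re\bigl(e^{-2i\theta}(z_t-z_s)\bigr),
\]
and essential Hermiticity of $Z$ means all non-zero differences $z_t-z_s$ share a common argument $\psi$ modulo $\pi$. Hence every such difference vanishes precisely at the two values of $\theta\in[0,\pi)$ satisfying $2\theta\equiv\psi+\pi/2\pmod\pi$, which differ by $\pi/2$; between those values the ordering of all $\mu_j$ is fixed, so exactly two of them ever attain the maximum, and the sign change itself gives the crossing (non-nestedness). The ``right angle'' in the statement refers to these switching values of $\theta$, i.e.\ to the directions of the common tangent lines where the boundary passes from one elliptical arc to the other, not to the major axes of the ellipses. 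Once you drop the $w_j$-geometry and compute $\mu_s-\mu_t$ directly, the entire corollary is a two-line consequence of collinearity of the $z_j$.
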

\begin{proof}
Under the conditions imposed on $H$ and $Z$, from \eqref{muj} it follows that \[ \mu_s(\theta)-\mu_t(\theta)= 2\re(e^{-2i\theta}(z_t-z_s)), \] with the arguments of all the non-zero differences $z_t-z_s$ being equal $\mod\pi$.
So, the switch in the sign of $\mu_s(\theta)-\mu_t(\theta)$ for all $s,t=1,\ldots,k$ such that $z_t\neq z_s$ does actually occur. Moreover, it happens at the same two values of $\theta$, different by $\pi/2$.
\end{proof}
In particular, the above corollary holds when $Z$ is Hermitian or Skew-Hermitian. The following is such an example. Consider:
\begin{align*}
A = \begin{bmatrix} 0 & -I_k + B \\ I_k + B^* & 0 \end{bmatrix}
\end{align*}
where $B$ is unitary. In this case, we obtain that $H = 4I_k$ and $Z = B^*-B$. Since $H$ is a scalar multiple of the identity, and $Z$ is skew-Hermitian, we conclude that $W(A)$ is the convex hull of two ellipses.

 The exactly same reasoning applies to the case of $A$ given by
	\begin{align*}
	A = \begin{bmatrix} 0 & -I'_k + B \\ I'_k + B^* & 0 \end{bmatrix},
	\end{align*}
where $I_k'$ is the identity $k$-by-$k$ matrix with the last column deleted, and $B\in\C^{k,k-1}$ is an isometry. This case was considered in \cite[Theorem~3.5]{Yeh}, see also \cite[Theorem 3.4]{CGWWY}.

We now turn to the situation when the eigenvalues $\mu_j(\theta)$ do not depend on $\theta$ in spite of the fact that the matrices $M(\theta)$ themselves do.

\begin{thm}\label{th:nilp} In the notation \eqref{HCD}, suppose that, for some subspace $L$ invariant under $H$, the range of $Z$ is contained in $L$ which in turn is contained in the kernel of $Z$. Then
$W(A)$ is an elliptical disk centered at $(\alpha + \beta)/2$ with major axis parallel to $e^{-i\phi}$ of length  $\sqrt{\mu  +4\abs{\gamma}^2 }$ and  minor axis of length $\sqrt{\mu}$, where $\mu$ is the largest eigenvalue of $H-2Re Z$,  $\gamma = (\alpha-\beta)/2$ and $\phi$ is the (principal) argument of $i \gamma$.
\end{thm}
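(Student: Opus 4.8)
The strategy is to show that under the hypothesis the eigenvalues $\mu_j(\theta)$ of the matrix $M(\theta)=H-2\re(e^{-2i\theta}Z)$ are in fact independent of $\theta$, equal to the eigenvalues of $H-2\re Z$, so that Lemma~\ref{l:eig} yields $\lambda_j^2$ with exactly the elliptical dependence on $\theta$ required by Lemma~\ref{l:ell}, and all resulting ellipses are nested (hence $W(A)$ is a single elliptical disk). As usual we reduce to $\beta=-\alpha$, so $\gamma=\alpha$ and $(\alpha+\beta)/2=0$; the general case follows by the shift invariance $W(\omega A+tI)=\omega W(A)+t$.

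First I would record the structural consequence of the hypothesis. Writing $P$ for the orthogonal projection onto $L$, the two inclusions $\operatorname{ran}Z\subseteq L\subseteq\ker Z$ say precisely that $ZP=0$ and $PZ=Z$, i.e. $Z=(I-P)ZP$; in particular $Z^2=0$ and $ZZ=0$, and also $Z$ annihilates $L$ on the right. Since $L$ is invariant under the Hermitian matrix $H$, both $L$ and $L^\perp$ are $H$-invariant, so $H=PHP+(I-P)H(I-P)$ commutes with $P$. Now compute $M(\theta)^2$: expanding $(H-e^{-2i\theta}Z-e^{2i\theta}Z^*)^2$ and using $Z^2=0$, $(Z^*)^2=0$, $HZ=HPZ=PHZ$... — more directly, I would show that the off-diagonal (with respect to the decomposition $L\oplus L^\perp$) cross terms involving $Z,Z^*$ all drop out of the characteristic polynomial. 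Concretely, $e^{-2i\theta}Z$ maps $L\to L^\perp$ and kills $L^\perp$, while $e^{2i\theta}Z^*$ maps $L^\perp\to L$ and kills $L$; conjugating $M(\theta)$ by the unitary $\operatorname{diag}(e^{i\theta}I_{L^\perp}, e^{-i\theta}I_L)$ (block form relative to $L^\perp\oplus L$) replaces $e^{-2i\theta}Z$ by $Z$ and $e^{2i\theta}Z^*$ by $Z^*$ while fixing $H$ (which is block-diagonal), so $M(\theta)$ is unitarily similar to $M(0)=H-2\re Z$ for every $\theta$. Hence $\mu_{\max}(\theta)\equiv\mu:=\lambda_{\max}(H-2\re Z)$, and likewise every $\mu_j$ is constant.

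With this in hand, Lemma~\ref{l:eig} (in the form \eqref{la0}) gives
\[
\lambda_j(\theta)^2=\bigl(\im(e^{-i\theta}\gamma)\bigr)^2+\tfrac14\mu_j
=\tfrac{|\gamma|^2}{2}-\tfrac{\re(\gamma^2)}{2}\cos2\theta-\tfrac{\im(\gamma^2)}{2}\sin2\theta+\tfrac14\mu_j ,
\]
using $(\im(e^{-i\theta}\gamma))^2=\tfrac12|\gamma|^2-\tfrac12\re(e^{-2i\theta}\gamma^2)$. The largest of these over $j$ is attained at $j$ with $\mu_j=\mu$, uniformly in $\theta$, so $\lambda_{\max}(\im(e^{-i\theta}A))^2=A\cos2\theta+B\sin2\theta+C$ with $A=-\tfrac12\re(\gamma^2)$, $B=-\tfrac12\im(\gamma^2)$, $C=\tfrac12|\gamma|^2+\tfrac14\mu$; note $A^2+B^2=\tfrac14|\gamma|^4$ and $C\ge\tfrac12|\gamma|^2\ge\sqrt{A^2+B^2}$, with strict inequality once $\mu>0$ (and if $\mu=0$ the ``ellipse'' degenerates to a segment, still covered). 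By Lemma~\ref{l:ell}, $W(A)$ is the elliptical disk centered at the origin with half-axes $\sqrt{C\pm\sqrt{A^2+B^2}}=\sqrt{\tfrac{\mu}{4}+\tfrac{|\gamma|^2}{2}\pm\tfrac{|\gamma|^2}{2}}$, i.e. axes of length $\sqrt{\mu+4|\gamma|^2}$ and $\sqrt{\mu}$, with major axis forming angle $-\tfrac12\tan^{-1}(B/A)=-\tfrac12\arg(\gamma^2)=-\arg\gamma$ with the $x$-axis, that is, parallel to $e^{-i\phi}$ with $\phi=\arg(i\gamma)$. Undoing the shift gives the stated result for general $\alpha,\beta$.

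The main obstacle is the second paragraph: verifying carefully that the hypothesis really forces $M(\theta)$ to be unitarily (or at least similarly) equivalent to $M(0)$ for all $\theta$ — i.e. pinning down the right block/projection bookkeeping so that the $\theta$-dependence is a pure unitary rotation. Everything after that is the routine trigonometric identification of the ellipse, identical in spirit to the computation already carried out in the proof of Theorem~\ref{th:tri}.
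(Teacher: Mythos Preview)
Your approach is essentially identical to the paper's: block-decompose relative to $L\oplus L^\perp$, observe that a $\theta$-dependent diagonal unitary conjugates $M(\theta)$ to a fixed matrix (the paper writes this as $U^*MU$ with $U=\diag[I,-e^{2i\theta}I]$), and then invoke Lemmas~\ref{l:eig} and~\ref{l:cel} exactly as you do. One caveat on the bookkeeping you yourself flagged: since $\operatorname{ran}Z\subseteq L\subseteq\ker Z$ one has $PZ=Z$ and $ZP=0$, hence $Z=PZ(I-P)$ (not $(I-P)ZP$), i.e.\ $Z$ maps $L^\perp\to L$ and annihilates $L$ --- your conjugating unitary still does the job once this is straightened out.
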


\begin{proof}
With respect to the decomposition $\C^n= L\oplus L^\perp$, $H$ and $Z$ can be expressed as:
\begin{equation*}
    H = \begin{bmatrix} H_1 & 0 \\ 0 & H_2 \end{bmatrix},  \quad Z = \begin{bmatrix} 0 & Z_0 \\ 0 & 0 \end{bmatrix},
\end{equation*}
and therefore \[ M(\theta)= H - 2\re(e^{-2i \theta} Z)=\begin{bmatrix} H_1 & -e^{-2i\theta}Z_0 \\  -e^{2i\theta}Z_0^* & H_2\end{bmatrix}=U^*MU, \] where $U$ is the unitary matrix $\diag[I_{n-k},-e^{2i\theta}I_k]$,
and the matrix \[ M=\begin{bmatrix} H_1 & Z_0 \\ Z_0^* & H_2 \end{bmatrix} \] is independent of $\theta$.
Consequently, the eigenvalues $\mu_j$ of $M(\theta)$ are the same as those of $M$, and thus also do not depend on $\theta$.

By Lemma~\ref{l:eig} applied to $A_0=A-\frac{\alpha+\beta}{2}I$, the respective eigenvalues $\lambda_j(\theta)$ satisfy
\begin{align*}
    \lambda_j^2
    &
    =
     \left(\im (e^{-i\theta} \gamma)\right)^2 + \mu_j/4
    \\
    &
    =
   (\im\gamma)^2 \cos^2 \theta + (\re\gamma)^2 \sin^2 \theta  - \re\gamma\im\gamma\sin 2 \theta + \mu_j/4
   \\
   &
   =
    \frac{\mu_j}{4} + \frac{|\gamma|^2}{2} - \frac{\Re(\gamma^2)}{2} \cos 2 \theta - \frac{\im(\gamma^2)}{2}\sin 2 \theta.
\end{align*}
Consequently, in the notation of Lemma~\ref{l:ell}, \[ \lambda_{\max}^2(\theta)=\frac{\mu}{4} + \frac{|\gamma|^2}{2} - \frac{\Re(\gamma^2)}{2} \cos 2 \theta - \frac{\im(\gamma^2)}{2}\sin 2 \theta, \]
implying (by the same Lemma) that $W(A_0)$ is the elliptical disk centered at the origin with major axis parallel to $e^{-i \phi}$ of length $\sqrt{\mu  + 4|\gamma|^2 }$ and  minor axis of length $\sqrt{\mu}$.  Translating the matrix $A_0$ by $(\alpha + \beta)/2$ we obtain the result.
\end{proof}

Conditions imposed on $Z$ by Theorem~\ref{th:nilp} imply that it is nilpotent, namely $Z^2=0$.  If $H$ is a scalar multiple of the identity, then this is also sufficient.   Another sufficient condition is $CD=0$. Indeed, setting $L=\ker C$ and making use of the fact that the range of $D$ lies in $L$,  we then obtain the following matrix representations:
\[ C= \begin{bmatrix} 0 & C_{12} \\ 0 & C_{22}\end{bmatrix}, \quad D=\begin{bmatrix}D_{11} & D_{12} \\ 0 & 0\end{bmatrix} \] with respect to the decomposition $L\oplus L^\perp$.
 Direct computations show then
\[ Z=\begin{bmatrix} 0 & D_{11}C_{12}+D_{12} C_{21} \\ 0 & 0\end{bmatrix}, \quad
H=\begin{bmatrix}D_{11}D_{11}^*+D_{12}D_{12}^* & 0 \\ 0 & C_{12}^*C_{12}+C_{22}^*C_{22}\end{bmatrix},
\]
	implying that indeed so chosen $L$ is invariant under $H$ and contains the range of $Z$.

 This case covers the remaining Scenario~3 of \cite[Theorem 7]{ChienHung} in which $C$ has the first zero column while $D$ is a matrix with all rows being zero except maybe the first one.

Note, however, that the same conclusion can be reached by passing from \eqref{A} to \eqref{B} and invoking the result for the case $DC=0$ treated earlier.

 In case of square matrices $C,D$ (i.e., when $k=n/2$), the nilpotency of $Z$ implies that $C$ and $D$ cannot both be invertible. The following class of examples shows that one of them still can.

Let $k=n/2$, choose $C$ having non-zero pair-wise orthogonal columns, and let $D=JC^{-1}$, where $J$ has the block structure $\begin{bmatrix} 0 & T \\ 0 & 0\end{bmatrix}$ with an arbitrary $k$-by-$k$ block $T$.
Then of course $Z=J$, the matrix $C^*C$ is diagonal and invertible, implying the block diagonal structure of $H=C^*C+J(C^*C)^{-1}J^*$. So, conditions of Theorem~\ref{th:nilp} are satisfied by $L$ being the span of the first $k$ vectors $e_1,\ldots,e_k$ of the standard basis. A numerical example is below.

\begin{Ex} \label{ex:nilp2}
Consider $C$ and $D$ as follows:
\begin{align*}
    C=\begin{bmatrix}
    1+i & 1+i \\
    1 &  -2 \end{bmatrix},
    \qquad
    D =\frac{1}{6}\begin{bmatrix}1 - i & -2\\0 & 0\end{bmatrix}.
\end{align*}
We have then:
\begin{align*}
    Z =
    \begin{bmatrix}
    0 & 1 \\
    0 & 0
    \end{bmatrix},
    \qquad
    H = \frac{1}{6}\begin{bmatrix}
        19 & 0 \\
        0 & 36
    \end{bmatrix},
\end{align*}
and $L=\Span\{e_1\}$ does the job.

\begin{figure}[H]
    \centering
    \includegraphics[scale=0.4]{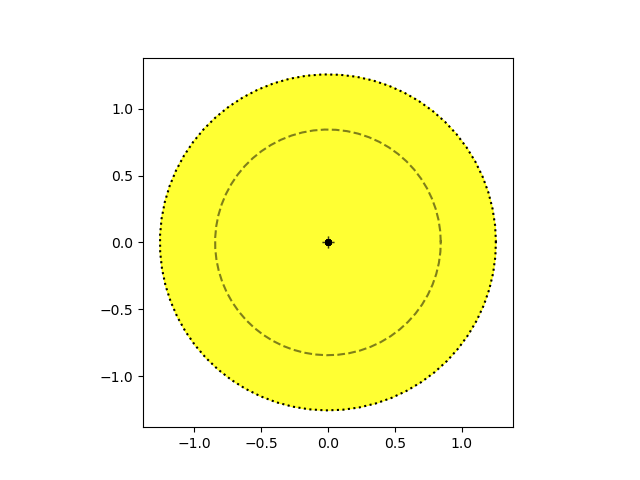}
    \includegraphics[scale=0.4]{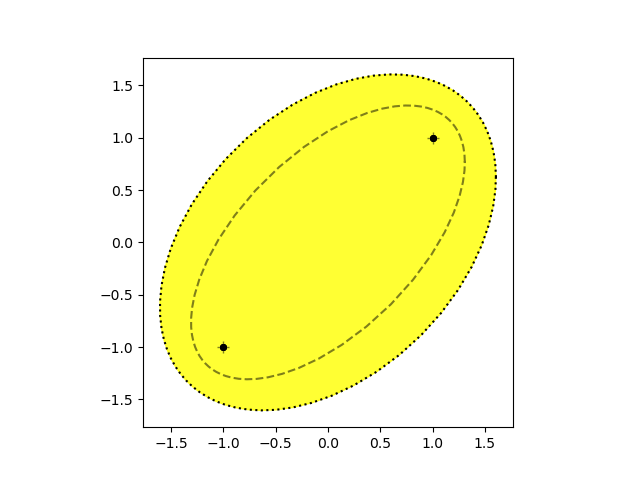}
    \caption{$W(A)$ from example~\ref{ex:nilp2}; $\alpha = \beta = 0$ on the left and $\alpha = -\beta = 1+i$ on the right.}
    \label{fig:ellnil}
\end{figure}
\end{Ex}

\section{The $k=2$ case}\label{s:k=2}

In this section, we concentrate on the case $k=2$. Independent of the value of $n\ (\geq 4)$, matrices \eqref{HCD} are then of 2-by-2 size, and the eigenvalues $\mu_j(\theta)$ of \eqref{M} can be found explicitly by solving  a quadratic equation. Namely,
denoting by $h_j$ and $z_j$ the eigenvalues of $H$ and $Z$, respectively, and by $z_{ij}$ the entries of $Z$ in the orthonormal basis of the eigenvectors of $H$ {(by convention choosing $Z$ to be diagonal when $H$ is a scalar multiple of the identity)}:

\eq{mus}
\mu_{1,2} = \frac{1}{2} \left( h_{1} + h_{2} - 2\Re(z_{11} + z_{22})\cos 2 \theta - 2\im(z_{11} + z_{22})\sin 2 \theta
\pm
\sqrt{\Delta} \right)
\en
where $\Delta$ is given by:
\eq{D}\begin{aligned}
\Delta =
&
(h_{1} - h_{2})^2 + 2\abs{z_{22}-z_{11}}^2 +  4|z_{12}|^2 + 4|z_{21}|^2
\\
+
&
4 (h_{1} - h_{2})\Re(z_{22} - z_{11}) \cos 2 \theta
\\
+
&
4 (h_{1} - h_{2})  \im(z_{22} - z_{11}) \sin 2\theta
\\
+
&
2 \Re( (z_1 - z_2)^2 ) \cos 4\theta
\\
+
&
2 \im( (z_1 - z_2)^2 ) \sin 4\theta
\end{aligned}\en

\begin{thm}\label{th:k=2} Let in \eqref{A} $k=2$. Then $W(A)$ is the convex hull of ellipses centered at $(\alpha+\beta)/2$ if and only if at least one of the following conditions holds: \\ {\em(i)} $Z$
is normal commuting with $H$,\\ {\em (ii)} eigenvalues of $Z$ coincide, and it shares an eigenvector with $H$, \\ {\em (iii)} $H$ and $Z$ are such that
\eq{iv} -(h_1-h_2)^2\frac{z_{12}z_{21}}{(z_1-z_2)^2}=(\abs{z_{12}}+\abs{z_{21}})^2.\en

\end{thm}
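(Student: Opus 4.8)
The plan is to use Lemma~\ref{l:ell} to convert the geometric requirement on $W(A)$ into an algebraic statement about the discriminant $\Delta(\theta)$ appearing in \eqref{D}, and then to decide for which $C,D$ that statement holds. After the shift making $\alpha+\beta=0$ (so $\gamma=\alpha$), Lemma~\ref{l:eig} gives $\lambda_{\max}^2(\theta)=\bigl(\im(e^{-i\theta}\gamma)\bigr)^2+\mu_{\max}(\theta)/4$, where $\mu_{\max}(\theta)=\tfrac12\bigl(\tr M(\theta)+\sqrt{\Delta(\theta)}\bigr)$ is the larger root in \eqref{mus}. Both $\bigl(\im(e^{-i\theta}\gamma)\bigr)^2$ and $\tfrac14\tr M(\theta)=\tfrac14(h_1+h_2)-\tfrac12\re\bigl((z_1+z_2)e^{-2i\theta}\bigr)$ are trigonometric polynomials in $\cos2\theta,\sin2\theta$ alone, so by Lemma~\ref{l:ell} $W(A)$ is the convex hull of ellipses centered at $(\alpha+\beta)/2$ exactly when $\sqrt{\Delta(\theta)}$ is, on the arcs of some partition of $\T$, of the form $a_j\cos2\theta+b_j\sin2\theta+c_j$. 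Since $\Delta$ is a fixed nonnegative real trigonometric polynomial involving only the $0$th, $2$nd and $4$th harmonics, and since a nonzero trigonometric polynomial of degree $\le4$ cannot vanish on an arc, the equality $\sqrt\Delta=a_j\cos2\theta+b_j\sin2\theta+c_j$ on one arc forces $\Delta$ to equal the square of that polynomial \emph{identically}; hence all the triples $(a_j,b_j,c_j)$ differ only by sign and $\Delta=g^2$ for a single real $g(\theta)=a\cos2\theta+b\sin2\theta+c$. Conversely, if $\Delta=g^2$, then $\sqrt\Delta=\abs{g}$ is piecewise $\pm g$. Thus the theorem is equivalent to: $\Delta=g^2$ for such a $g$ $\iff$ at least one of \textup{(i), (ii), (iii)} holds.

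Next I would compare harmonics in $\Delta=g^2$. Writing $w=(z_1-z_2)^2$ and expanding, matching the $4$th, $2$nd and constant harmonics of $g^2$ against \eqref{D} yields
\[
(a+ib)^2=4w,\qquad 2c(a+ib)=4(h_1-h_2)(z_{22}-z_{11}),\qquad \tfrac12(a^2+b^2)+c^2=P,
\]
with $P=(h_1-h_2)^2+2\abs{z_{11}-z_{22}}^2+4\abs{z_{12}}^2+4\abs{z_{21}}^2$. If $z_1=z_2$ (so $w=0$), the first relation forces $a=b=0$, the second forces $(h_1-h_2)(z_{22}-z_{11})=0$, and the third just fixes $c^2=P\ge0$; so $\Delta=g^2$ is solvable precisely when $h_1=h_2$ or $z_{11}=z_{22}$. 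This is exactly condition \textup{(ii)} (which already entails $z_1=z_2$): if $H$ is scalar, every vector is an $H$-eigenvector; if $H$ is not scalar, $z_1=z_2$ together with $z_{11}=z_{22}$ forces $z_{12}z_{21}=0$, so one of the two $H$-eigenvectors is also an eigenvector of $Z$; and conversely, putting a common eigenvector first in an orthonormal basis produces $h_1=h_2$ or $z_{11}=z_{22}$.

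If $z_1\ne z_2$, the first relation forces $a+ib=\pm2(z_1-z_2)$, the second then determines $c$ and forces the ratio $r:=(z_{22}-z_{11})/(z_1-z_2)$ to be real (condition $(A)$), and, granting $(A)$, the third relation reduces to a single scalar identity $(B)$ linking $h_1-h_2$, $\abs{z_1-z_2}$, $\abs{z_{12}}^2+\abs{z_{21}}^2$ and $r$. So in this case the property holds iff $(A)\wedge(B)$, and it remains to match this with \textup{(i)}$\,\vee\,$\textup{(iii)}. When $H$ is scalar, the second relation forces $c=0$ and $(B)$ becomes $\abs{(z_{11}-z_{22})^2+4z_{12}z_{21}}=\abs{z_{11}-z_{22}}^2+2\abs{z_{12}}^2+2\abs{z_{21}}^2$; by the triangle inequality and AM--GM this means ``$\abs{z_{12}}=\abs{z_{21}}$ and $(z_{11}-z_{22})^2$, $z_{12}z_{21}$ are nonnegative real multiples of one another'', which is precisely normality of $Z$, i.e.\ \textup{(i)} since $H$ is scalar. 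When $h_1\ne h_2$, the crux is that $(A)\wedge(B)$ forces $r^2\ge1$: if $r^2<1$ then $z_{12}z_{21}=\tfrac14(1-r^2)(z_1-z_2)^2$ is a positive real multiple of $(z_1-z_2)^2$, and feeding the bound $\abs{z_{12}}^2+\abs{z_{21}}^2\ge2\abs{z_{12}z_{21}}$ into $(B)$ gives $(h_1-h_2)^2\le0$, a contradiction. Once $r^2\ge1$, $z_{12}z_{21}$ is a nonpositive real multiple of $(z_1-z_2)^2$; eliminating $r$ between $(B)$ and the modulus identity $4\abs{z_{12}z_{21}}=(r^2-1)\abs{z_1-z_2}^2$ turns $(B)$ into $(r^2-1)(h_1-h_2)^2=4(\abs{z_{12}}+\abs{z_{21}})^2$, which, since $z_{12}z_{21}/(z_1-z_2)^2=\tfrac14(1-r^2)\le0$, is precisely \eqref{iv}; conversely, \eqref{iv} gives $r^2=1+4(\abs{z_{12}}+\abs{z_{21}})^2/(h_1-h_2)^2\ge1$, hence $(A)$, and taking moduli in \eqref{iv} recovers $(B)$.

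Finally I would assemble the three regime-conditions into the single disjunction: ``co-centered ellipses'' holds iff one of the regimes ($z_1=z_2$; $z_1\ne z_2$ with $H$ scalar; $z_1\ne z_2$ with $h_1\ne h_2$) occurs together with its operative condition (respectively \textup{(ii), (i), (iii)}), and each of \textup{(i), (ii), (iii)} both places the parameters in a definite regime and supplies the operative condition there --- e.g.\ \textup{(i)} with $z_1=z_2$ makes $Z$ a scalar matrix, and \textup{(i)} with $h_1\ne h_2$ makes $Z$ diagonal in the eigenbasis of $H$, so that \eqref{iv} holds trivially. The two steps I expect to be genuinely delicate are the reduction in the first paragraph --- establishing that ``$\sqrt\Delta$ piecewise second-harmonic'' forces $\Delta$ to be an honest square of a second-harmonic polynomial --- and the ``$r^2\ge1$'' step, which is where the one-sided form of \eqref{iv} (an equation whose left side is necessarily nonnegative) is produced by the argument rather than imposed.
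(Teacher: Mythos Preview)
Your proposal is correct and follows essentially the same route as the paper: reduce via Lemma~\ref{l:cel} and Lemma~\ref{l:eig} to the requirement that $\Delta(\theta)$ be the square of a real second-harmonic polynomial $g(\theta)=a\cos2\theta+b\sin2\theta+c$, match harmonics to obtain the system \eqref{abc}, and then carry out a case analysis on $z_1=z_2$ versus $z_1\ne z_2$ (and $h_1=h_2$ versus $h_1\ne h_2$). Your handling of the reduction---arguing that a piecewise second-harmonic $\sqrt\Delta$ forces $\Delta=g^2$ globally because a degree-$4$ trigonometric polynomial cannot vanish on an arc---is in fact more explicit than the paper's, and your $r^2\ge1$ contradiction via AM--GM is exactly the content of the paper's nonpositivity step \eqref{nonpos} in Subcase~4B, just reorganized.
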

{ Observe that conditions (i) and (ii) hold simultaneously if and only if $Z$ is a scalar multiple of the identity, cases (ii) and (iii) are mutually exclusive, and ``(i) but not (ii)'' is a subcase of (iii). Examples \ref{ex:nilp2} and  \ref{ex:HZcom} show, respectively, that situations ``(ii) but not (i)'' and ``(i) but not (ii)'' indeed materialize. As for ``(iii) but not (i)'', the respective examples also exist. Their construction is more involved, however, and can be achieved by solving certain non-linear matrix equations with the use of \cite{ERR93}. This issue will be addressed elsewhere.}
\begin{proof} According to \eqref{mus} and \eqref{la}, $\lambda_1^2(\theta)+\lambda_2^2(\theta)$ is a real linear combination of $\cos 2\theta, \sin 2\theta$ and 1. Consequently, each of $\lambda_j^2$ is such a linear combination if and only if their difference is. Equivalently, this happens if and only if $\Delta$ has the form $(a\cos 2\theta +b\sin 2\theta +c)^2$
for some $a,b,c\in\R$.

As it can be seen from \eqref{D}, this in its turn is equivalent to the system
\eq{abc}\begin{aligned} a+ib=\pm{2 (z_1-z_2)}, \\
 (a+ib)c=2(h_1-h_2)(z_{22}-z_{11}), \\
c^2=(h_1-h_2)^2+2\abs{z_{22}-z_{11}}^2+4\abs{z_{12}}^2+4\abs{z_{21}}^2-2\abs{z_1-z_2}^2.\end{aligned} \en
It therefore remains to show that one of the conditions (i)--(iii) holds if and only if the system \eqref{abc} is consistent. To this end, consider several possibilities separately, keeping in mind that
\eq{z12} (z_1-z_2)^2=(z_{11}-z_{22})^2+4z_{12}z_{21}. \en

{\sl Case 1.} $z_1=z_2$, $h_1=h_2$, implying immediately  that (ii) holds.  On the other hand, the solution of \eqref{abc} is delivered by
\[ a=b=0,\quad  c=\pm (2\abs{z_{22}-z_{11}}^2+4\abs{z_{12}}^2+4\abs{z_{21}}^2)^{1/2}. \]

{\sl Case 2.} $z_1=z_2$, $z_{11}=z_{22}$. According to \eqref{z12}, this is only possible when $z_{12}z_{21}=0$, implying that $H$ and $Z$ share a common eigenvector. So, (ii) holds again. And the system \eqref{abc} is consistent, having
\[ a=b=0, \quad c=\pm ((h_1-h_2)^2+4\abs{z_{12}}^2+4\abs{z_{21}}^2)^{1/2}\]
as its solution.

{\sl Case 3.} $z_1=z_2$, while $h_1\neq h_2, z_{11}\neq z_{22}$. Then $Z$ is not normal, because the only normal matrices with coinciding eigenvalues are scalar multiples of the identity. Also, \eqref{z12} implies that $z_{12}z_{21}\neq 0$, and therefore $H$ and $Z$ do not have eigenvectors in common. Consequently, neither of (i)--(ii) holds.  And (iii) does not hold either, since the left hand side of \eqref{iv} is undefined.

On the other hand, the system \eqref{abc} is inconsistent. Indeed, the first equation implies that $a=b=0$, which is in contradiction with the second equation. 

{\sl Case 4.} $z_1\neq z_2$. The solution of \eqref{abc}, if exists, is given by
\eq{abc4} a = \pm2 \re (z_1-z_2), \ b = \pm 2\im (z_1-z_2), \ c=\pm\frac{(h_1-h_2)(z_{22}-z_{11})}{z_1-z_2} \en
(with the coordinated choice of the three signs), as follows from the first two equations.

{\sl Subcase 4A.} $h_1=h_2$ or $z_{11}=z_{22}$. Then $c=0$, and in order for \eqref{abc4} to indeed deliver a solution, according to the third equation it is necessary and sufficient that

\eq{hz} (h_1 - h_2)^2 + 2 \abs{z_{22}-z_{11}}^2+4\abs{z_{12}}^2+4\abs{z_{21}}^2-2\abs{z_1-z_2}^2=0. \en
If $z_{11} = z_{22}$, formula \eqref{z12} allows to substitute $\abs{z_1-z_2}^2$ by $4\abs{z_{12}z_{21}}$, and so \eqref{hz} can be rewritten as
\[ (h_1 - h_2)^2+4(|z_{12}| - |z_{21}| )^2=0. \]
So, condition $z_{11}=z_{22}$ implies $h_1 = h_2$, and we may concentrate on the latter.

According to \eqref{hz} then:

\[ 2 \abs{z_{22}-z_{11}}^2+4\abs{z_{12}}^2+4\abs{z_{21}}^2=2\abs{z_1-z_2}^2. \]

Comparing with \eqref{z12} we see that this happens if and only if
\[ \abs{z_{12}}=\abs{z_{21}}, \quad 2\arg(z_{11}-z_{22})=\arg z_{12}+\arg z_{21} \mod 2\pi \]
 (the condition on the arguments being imposed only when it makes sense, i.e., $z_{11}\neq z_{22},\ z_{12}z_{21}\neq 0$).

But this is exactly the normality criterion for $Z$.
 Consequently, (i) holds. Observe that $Z$ is not a scalar multiple of the identity since $z_1\neq z_2$.

{\sl Subcase 4B.} $h_1\neq h_2$. Then, as shown above, $z_{11}\neq z_{22}$, and in order for $c$ from \eqref{abc4} to be real, it is necessary and sufficient that \eq{argz} \arg (z_{11}-z_{22})=\arg (z_{1}-z_{2}) \mod \pi. \en Also, plugging in this value of $c$ into the third equation of \eqref{abc} yields
\[  \frac{(h_1-h_2)^2(z_{22}-z_{11})^2}{(z_1-z_2)^2} = (h_1-h_2)^2+2\abs{z_{22}-z_{11}}^2+4\abs{z_{12}}^2+4\abs{z_{21}}^2-2\abs{z_1-z_2}^2. \]
Taking \eqref{z12} into consideration, this simplifies to

\eq{iv-i} -2(h_1-h_2)^2\frac{z_{12}z_{21}}{(z_1-z_2)^2}=\abs{(z_1-z_2)^2-4z_{12}z_{21}}- \abs{z_1-z_2}^2+2\abs{z_{12}}^2+2\abs{z_{21}}^2. \en
Since the right hand side of \eqref{iv-i} is non-negative, this equality hold only if \eq{nonpos} \frac{z_{12}z_{21}}{(z_1-z_2)^2}\leq 0.\en

The latter inequality, when combined with \eqref{z12}, yields
\eq{z121} \abs{z_{11}-z_{22}}^2=\abs{z_1-z_2}^2+4\abs{z_{12}z_{21}}, \en
which allows to simplify \eqref{iv-i} further to \eqref{iv}.

On the other hand, \eqref{iv} implies \eqref{nonpos} (this is clear if $h_1\neq h_2$, and can be arranged by putting $Z$ in a triangular form if this is not the case). Along with \eqref{z12}, this guarantees \eqref{argz}. Thus, \eqref{abc} admits a real solution.

We have exhausted all possible cases, thus completing the proof. \end{proof}

Since $k=2$, for matrices $A$ satisfying conditions of Theorem~\ref{th:k=2} their generating curve $C(A)$ consists of two ellipses. Depending
on whether or not these ellipses are nested, the numerical range $W(A)$ itself will either be an elliptical disk, or the convex hull of two non-nested ellipses with the same center, thus having
four flat portions on its boundary. The next statement describes when which of the two possibilities occurs.

\begin{thm}\label{th:1or2}Let the matrix $A$ satisfy conditions of  Theorem~\ref{th:k=2}. Then $W(A)$ is the convex hull of two non-nested ellipses in case {\em (i)}, if in addition $\abs{h_1-h_2}<2\abs{z_1-z_2}$, and just an elliptical disk in all other cases. 
 \end{thm}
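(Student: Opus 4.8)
The plan is to analyze, in each of the three cases (i)--(iii) of Theorem~\ref{th:k=2}, whether the two ellipses composing $C(A)$ are nested, using the explicit description of $\lambda_j^2(\theta)$ furnished by \eqref{mus} and \eqref{la0}. Recall that after shifting $A$ so that $\alpha+\beta=0$, the two generating ellipses are co-centered at the origin and, by Lemma~\ref{l:cel} and its proof, each is encoded by a triple $(A_j,B_j,C_j)$ via $\lambda_j^2(\theta)=A_j\cos2\theta+B_j\sin2\theta+C_j$, with the curve $C(A)$ being the ``upper envelope'' $\lambda_{\max}^2(\theta)=\max_j\lambda_j^2(\theta)$. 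Two co-centered ellipses $E_1,E_2$ are nested (say $E_2\subseteq E_1$) precisely when $\lambda_1^2(\theta)\ge\lambda_2^2(\theta)$ for all $\theta$; if neither inequality holds throughout $[0,\pi)$, the two functions cross, the envelope genuinely switches between them, and $W(A)=\conv(E_1\cup E_2)$ has four flat portions. So the whole question reduces to: does the real trigonometric polynomial $\lambda_1^2(\theta)-\lambda_2^2(\theta)$ change sign?

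From \eqref{mus} and \eqref{la0}, $\lambda_1^2(\theta)-\lambda_2^2(\theta)=\tfrac14(\mu_1(\theta)-\mu_2(\theta))=\tfrac14\sqrt{\Delta(\theta)}$ up to a fixed choice of sign of the square root; but $\sqrt{\Delta}$ is nonnegative, so a more careful bookkeeping is needed --- the genuine difference of the two eigenvalue-squares is $\pm\tfrac14\sqrt{\Delta(\theta)}$ where the branch is forced to be continuous. Concretely, in the proof of Theorem~\ref{th:k=2} it was shown that in the cases where \eqref{abc} is consistent one has $\sqrt{\Delta(\theta)}=\abs{a\cos2\theta+b\sin2\theta+c}$ with $(a,b,c)$ given by \eqref{abc4} (or $a=b=0$, $c\neq0$ in the degenerate subcases). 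Hence $\lambda_1^2(\theta)-\lambda_2^2(\theta)$ is, up to the overall fixed sign, equal to $\tfrac14(a\cos2\theta+b\sin2\theta+c)$, i.e. a pure first-harmonic sinusoid plus a constant. Such a function changes sign if and only if the amplitude $\sqrt{a^2+b^2}$ strictly exceeds $\abs{c}$. First I would therefore compute $\sqrt{a^2+b^2}$ and $\abs{c}$ in each case.

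In case (i) with $Z$ not a scalar multiple of the identity (so $z_1\ne z_2$): one is in Subcase~4A with $h_1=h_2$, so $c=0$ by \eqref{abc4}, while $\sqrt{a^2+b^2}=2\abs{z_1-z_2}>0$. Thus the difference is a nonzero first-harmonic sinusoid with no constant term, so it changes sign --- the ellipses are non-nested --- precisely when this amplitude is positive, which it always is here; but wait, the theorem statement conditions on $\abs{h_1-h_2}<2\abs{z_1-z_2}$, so I must reconcile: in case (i) one need not have $h_1=h_2$. Indeed, pure case (i) (normal $Z$ commuting with $H$) is Theorem~\ref{th:tri}'s setting with $k=2$, where by \eqref{muj} $\mu_1(\theta)-\mu_2(\theta)=(h_1-h_2)-2\re(e^{-2i\theta}(z_1-z_2))$, whence $\sqrt{a^2+b^2}=2\abs{z_1-z_2}$ and $\abs{c}=\abs{h_1-h_2}$; so sign change $\iff\abs{h_1-h_2}<2\abs{z_1-z_2}$, which is exactly the stated criterion. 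In case (iii), and in ``(i) but not (ii)'' handled via Subcase~4B: plugging \eqref{abc4} and using the relation \eqref{z121} that was derived there, a short computation should show $a^2+b^2=\abs{z_1-z_2}^2$-type expression equals $\abs{c}^2$ exactly, i.e. the amplitude equals the constant, so the sinusoid touches zero but does not strictly change sign --- giving a nested pair (an elliptical disk). Likewise in case (ii) (coinciding eigenvalues of $Z$, shared eigenvector with $H$): one is in Case~1 or Case~2 of the earlier proof, where $a=b=0$ and $c\ne0$, so $\lambda_1^2-\lambda_2^2$ is a nonzero constant --- the ellipses are trivially nested, $W(A)$ is an elliptical disk.

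The main obstacle I anticipate is the boundary/degenerate bookkeeping: (a) confirming that when the amplitude exactly equals $\abs{c}$ (cases (iii) and Subcase~4B) the two ellipses are genuinely nested rather than, say, internally tangent in a way that still produces flat portions --- here I would note that $\lambda_1^2(\theta)-\lambda_2^2(\theta)$ having constant sign (allowing isolated zeros) is equivalent to $E_2\subseteq E_1$, with the isolated zeros meaning the ellipses are internally tangent at two antipodal points but still nested, so $\conv(E_1\cup E_2)=E_1$ is an elliptical disk; and (b) handling the case $\abs{h_1-h_2}=2\abs{z_1-z_2}$ in (i), where again the sinusoid $a\cos2\theta+b\sin2\theta+c$ touches zero without changing sign, so the ellipses are internally tangent and $W(A)$ is still an elliptical disk --- consistent with the strict inequality in the statement. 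A secondary point to get right is the overlap ``(i) and (iii)'': when (i) holds with $Z$ not scalar, one checks directly (as above) that the criterion $\abs{h_1-h_2}<2\abs{z_1-z_2}$ is the honest dividing line, and that in the sub-case of (i) that also falls under (iii) one must have $\abs{h_1-h_2}\ge2\abs{z_1-z_2}$, so no contradiction arises. Assembling these case-by-case amplitude comparisons yields the statement.
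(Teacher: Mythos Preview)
Your overall approach coincides with the paper's: reduce the nesting question to whether $a\cos 2\theta+b\sin 2\theta+c$ changes sign, i.e., to the comparison $a^2+b^2$ versus $c^2$, and treat cases (i)--(iii) separately using \eqref{abc}. Cases (i) and (ii) are handled correctly and match the paper.

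Your case (iii), however, contains a genuine error. You assert that ``the amplitude equals the constant'', i.e., $a^2+b^2=c^2$ exactly, so that the sinusoid merely touches zero. This is false. Using \eqref{z121}, the third equation of \eqref{abc} becomes $c^2=(h_1-h_2)^2+4(\abs{z_{12}}+\abs{z_{21}})^2$; substituting \eqref{iv} (which in case (iii) proper requires $z_{12}z_{21}\neq 0$) one obtains
\[
c^2=(\abs{z_{12}}+\abs{z_{21}})^2\,\frac{\abs{z_1-z_2}^2+4\abs{z_{12}z_{21}}}{\abs{z_{12}z_{21}}}
\geq \frac{(\abs{z_{12}}+\abs{z_{21}})^2}{\abs{z_{12}z_{21}}}\,\abs{z_1-z_2}^2
\geq 4\abs{z_1-z_2}^2=a^2+b^2,
\]
the last step by AM--GM. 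Both inequalities are strict unless $z_{12}=z_{21}=0$, which throws you back into case (i). So in case (iii) proper the sinusoid is bounded strictly away from zero and the ellipses are \emph{strictly} nested, not internally tangent. Your conclusion (elliptical disk) survives, but only by luck; the computation you anticipate does not come out the way you predict.

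Your resolution of the overlap ``(i) and (iii)'' is also wrong: it is not true that this overlap forces $\abs{h_1-h_2}\geq 2\abs{z_1-z_2}$. In case (i) the chosen basis makes $z_{12}=z_{21}=0$, so \eqref{iv} degenerates to $0=0$ and the inequality chain above is vacuous (division by zero). There is no contradiction to resolve: case (iii) with $z_{12}z_{21}\neq 0$ is disjoint from case (i), and on the overlap one simply applies the case (i) criterion.
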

\begin{proof}For matrices $A$ satisfying either of the conditions (i)--(iii) of Theorem~\ref{th:k=2}, formulas \eqref{mus} for the eigenvalues of $M(\theta)$ given by \eqref{M} take the form:
\begin{multline*} \mu_{1,2} = \frac{1}{2}( h_{1} + h_{2} - 2\Re(z_{11} + z_{22})\cos 2 \theta \\ - 2\im(z_{11} + z_{22})\sin 2 \theta \pm (a\cos 2\theta+b\sin 2\theta+c)),\end{multline*}
where $a,b,c$ solve \eqref{abc}.

The respective ellipses are nested if and only if the difference $\mu_1(\theta)-\mu_2(\theta)$  does not change the sign, in other words, if and only if \eq{1or2} 4\abs{z_1-z_2}^2 = a^2+b^2 \leq c^2.\en

This immediately takes care of case (ii), in which $a=b=0$, and thus \eqref{1or2}  holds. Case (i) is also straightforward. Indeed, in this case $c^2=(h_1-h_2)^2$, so that \eqref{1or2} is equivalent to  $2\abs{z_1-z_2} \leq \abs{h_1-h_2}$.

It remains to consider case (iii). Using \eqref{z121}, the third formula from \eqref{abc} can be rewritten as
\[ c^2=(h_1-h_2)^2+4(\abs{z_{12}}+\abs{z_{21}})^2. \] From here and \eqref{iv}:
\begin{multline*} c^2=(\abs{z_{12}}+\abs{z_{21}})^2\frac{\abs{z_1-z_2}^2+4\abs{{z_{12}}z_{21}}}{\abs{{z_{12}}z_{21}}}\\
\geq (\abs{z_{12}}+\abs{z_{21}})^2\frac{\abs{z_1-z_2}^2}{\abs{{z_{12}}z_{21}}}\geq 4\abs{z_1-z_2}^2, \end{multline*}
implying that \eqref{1or2} does hold. \end{proof}

Note that normal 2-by-2 matrices are essentially Hermitian, and the respective parts of Theorem~\ref{th:1or2} agree with Corollary~\ref{co:tri}. Also, for $Z$ nilpotent part (ii) agrees with Theorem~\ref{th:nilp}.

\providecommand{\bysame}{\leavevmode\hbox to3em{\hrulefill}\thinspace}
\providecommand{\MR}{\relax\ifhmode\unskip\space\fi MR }
\providecommand{\MRhref}[2]{%
  \href{http://www.ams.org/mathscinet-getitem?mr=#1}{#2}
}
\providecommand{\href}[2]{#2}

\end{document}